\definecolor{DarkBlue}{rgb}{0.00,0.00,0.55}
\definecolor{DarkRed}{rgb}{0.55,0.00,0.00}
\definecolor{DarkGreen}{rgb}{0.00,0.55,0.00}
\definecolor{Bittersweet}{rgb}{1.0, 0.44, 0.37}
\definecolor{Purple}{rgb}{0.5, 0.0, 0.5}
\itshape\color{DarkGreen},
\newcommand{\vertiii}[1]{{\left\vert\kern-0.25ex\left\vert\kern-0.25ex\left\vert #1 
		\right\vert\kern-0.25ex\right\vert\kern-0.25ex\right\vert}}
\providecommand{\myfloor}[1]{\left \lfloor #1 \right \rfloor }
\newcommand{\citep}[1]{\cite{#1}}
\crefname{hypothesis}{Hypothesis}{Hypotheses}
\title{Additive Schwarz methods for serendipity elements\thanks{Submitted to the editors 4/26/2021.
\funding{This work was supported by National Science Foundation grant 1912653.}}}
\author{Jorge Marchena-Menendez\thanks{Department of Mathematics, Baylor University, Waco, TX 
  (\email{Jorge\_Marchena1@baylor.edu}).}
\and Robert c. Kirby\thanks{Department of Mathematics, Baylor University, Waco, TX 
  (\email{Robert\_Kirby@baylor.edu}).}
}
\begin{document}
	
	\maketitle
	
	\begin{abstract}
		While solving Partial Differential Equations (PDEs) with Finite Element Methods (FEM), serendipity elements allow us to obtain the same order of accuracy as rectangular tensor-product elements with many fewer degrees of freedom (DoFs).  To realize the possible computational savings, we develop some additive Schwarz methods (ASM) based on solving local patch problems.  Adapting arguments from Pavarino for the tensor-product case, we prove that patch smoothers give conditioning estimates independent of the polynomial degree for a model problem.  We also combine this with a low-order global operator to give an optimal two-grid method, with conditioning estimates independent of the mesh size and polynomial degree.  The theory holds for serendipity elements in two and three dimensions, and can be extended to multigrid algorithms.  Numerical experiments using Firedrake and PETSc confirm this theory and demonstrate efficiency relative to standard elements.
	\end{abstract}
	
	\begin{keywords}
		Serendipity elements, Finite Element Method, Additive Schwarz methods, Firedrake, Multigrid methods.
	\end{keywords}
	
	\begin{AMS}
		65M30, 65N55, 65F08
	\end{AMS}
	
	\section{Introduction}\label{sec:intro}
	
	A very popular method for solving Partial Differential Equations (PDEs) is the known Finite Element Method (FEM).
	These methods approximate solutions in finite-dimensional spaces consisting of piecewise polynomials or other suitable functions defined over some subdivision of the domain into simple shapes.
	Of current research interest is the serendipity space $\mathcal{S}_k$, which contains all of standard polynomials of total degree $k$ (hence denoted by $\mathcal{P}_k$), with enough additional polynomials to enforce $C^0$ continuity on quadrilaterals.
	This gives the same order of accuracy as tensor product  spaces $\mathcal{Q}_k$ with a much smaller approximating space, which may save time and resources when solving the PDEs with FEM.
	
	An important and general reference on the study on serendipity elements is the paper by Arnold and Awanou \cite{Arnold}. 
	Recently, Gillette et al.~\cite{Gillette} expanded the theory beyond $H^1$ elements to differential forms. 
	The full convergence theory for these elements requires rectangular or at least affinely-mapped quadrilateral/hexahedral elements.  Curvilinear mapping can degrade the order of convergence for serendipity elements~\cite{arnold2015finite} in particular cases, although triggering this pathology seems quite subtle, and we include some computational results on a general quadrangulation.
	Serendipity spaces that guarantee full accuracy on general quadrilaterals are known~\cite{arbogast2022direct}, but these spaces include some rational functions.  Our analysis in does not cover this case, although it is a possible extension.
	
	While using higher-degree polynomials can lead to very accurate solutions, it is also important to consider the impact that they have on solving the resulting algebraic systems.
	In particular, both the size of the system and its condition number can increase dramatically as the degree increases.
	In a multigrid framework, classical smoothers like Jacobi iteration degrade or even fail as the degree increases.
	In order to obtain solvers that are uniform in the polynomial degree, we turn to the class of \emph{additive Schwarz} methods.  
	Pavarino~\cite{Pavarino1993/94} develops such techniques for high-order discretizations on classical $\mathcal{Q}_k$ elements.
	Here, one uses small, overlapping subdomains consisting of cells sharing each vertex in the mesh.
	An additive Schwarz preconditioner then approximates the inverse of the finite element operator by the sum of the inverses of these restrictions to patches, hence called a \emph{patch smoother}.
	Pavarino (by a rather lengthy technical argument) gives estimates for these methods that are in fact uniform in the polynomial degree.
	In order to obtain a preconditioner that is also uniform in the mesh parameter $h$, one can either include the lowest order $\mathcal{Q}_1$ space over the original mesh in the subspace decomposition or else use the patch smoother on each level of a multigrid hierarchy.
	Similar estimates for simplicial $\mathcal{P}_k$ elements are given in~\cite{Schoeberl:2008}, but neither analysis directly covers the case of serendipity elements.
	However, the reduced cardinality of the $\mathcal{S}_k$ space relative to $\mathcal{Q}_k$ suggests that such additive Schwarz methods could be a very powerful tool to combine with  serendipity elements.
	We refer the reader to references such as~\cite{arnold2000multigrid,Schoeberl1999}, noting that for some problems, multigrid requires patch-based Schwarz smoothers even for low-order discretizations.
	
	In this paper, we provide the missing technical justification of these additive Schwarz methods in the context of serendipity spaces and explore their practical properties.  \Cref{sec:main} contains a description of the model problem and its variational formulation and approximation spaces.   
	Then, \cref{sec:devitermeth} presents the resulting preconditioners and their analysis.  \cref{subsec:asloworder} provides the key theoretical developments of the paper.  
	By a judicious use of the Ritz projection~\cite{Thomee,wheeler1973apriori}, we are able to extend Pavarino's technical results from the $\mathcal{Q}_k$ spaces to serendipity elements.
	This allows us to prove that the patch smoother behaves uniformly in the degree $k$.
	Results for the patch smoother itself allow us to give optimal estimates both for a two-level additive Schwarz and a multigrid method.
	\Cref{sec:num} provides a suite of numerical experiments obtained using the Firedrake project~\cite{Rathgeber:2016}, and then we offer concluding remarks in~\cref{sec:conc}.

	\section{Serendipity Elements}\label{sec:main}
	We consider a model problem for linear, self-adjoint, second order elliptic problems on a bounded Lipschitz region $\Omega \subset\mathbb{R}^n$ with $n=2, 3$.
	
	We consider the following example with Dirichlet boundary conditions imposed on $\Gamma_{\mathrm{D}},$ a closed subset of $\partial \Omega$ with positive measure, and homogeneous Neumann conditions on the rest of the boundary $\Gamma_{\mathrm{N}}$.  Our techniques can be readily adapted to other boundary conditions.  The boundary value problem is
	$$
	\left\{\begin{aligned}
		-\Delta u &=& f & & \text { in } \Omega, \\
		u &=& u_{0} & & \text { on } \Gamma_{\mathrm{D}}, \\
		\frac{\partial u}{\partial n} &=& g & & \text { on } \Gamma_{\mathrm{N}}.
	\end{aligned}\right.
	$$
	
	We suppose that $f$ and $u_{0}$ satisfy standard regularity assumptions and that $u_{0}=0$.   Define the subspace $H_D^1(\Omega) = \left\{v \in H^{1}(\Omega): v|_{\Gamma_{\mathrm{D}}} = 0\right\} \equiv V$ and affinely shifted set
        $H_{D,u_0}^1(\Omega) = \left\{v \in H^{1}(\Omega): v|_{\Gamma_{\mathrm{D}}} = u_0\right\} \equiv V_{D}$.  The variational form of our problem is to find $u^* \in V_{D}$ such that 
	\begin{align} \label{ref1}
		a\left(u^{*}, v\right)=F(v), \quad \forall v \in V,
	\end{align}
	where
	\[
	a(u, v)=\int_{\Omega} \nabla u \cdot \nabla v dx \quad \text { and } \quad F(v)=\int_{\Omega} f v d x +\int_{\Gamma_{\mathrm{N}}} g v ds.
	\]

	Although our theory is worked out for scalar coercive problems such as~\eqref{ref1}, it generalizes readily to symmetric coercive vector-valued problems such as planar elasticity~\cite{brennerandscott}.
	This problem determines the elastic deformation $u$ of a reference domain $\Omega$ subject to some loading function $f$.	
	Let $\lambda$ and $\mu$ be the Lam\'e constants.  For some vector displacement field $u$, $\varepsilon(u) = \frac{1}{2} \left( \nabla u + \nabla u^T\right)$ is the symmetric gradient.  The plane stress associated with $u$ is
	\[
	\sigma(u) = \lambda tr(\varepsilon(u)) I + 2 \mu \varepsilon(u).
	\]
	
	Here, we take $V$ to be the subspace of $(H^1(\Omega))^2$ with functions vanishing on the Dirichlet boundary.  With these definitions, the variational problem for elastic displacement is to find $u$ in $V$ such that
	\begin{equation}
		\label{elasticity}
		a(u, v) = (f, v), \ \ \ v \in V,
	\end{equation}
	where
	\begin{equation}
		a(u, v) = 
		\int_\Omega \sigma(u) : \varepsilon(v)  dx = \int_\Omega 2 \mu \epsilon(u) : \epsilon(v) + \lambda \operatorname{div}(u) \operatorname{div}(v) dx.
	\end{equation}
	
	As in~\cite{Schoeberl:2008}, the theory and practice of the additive Schwarz method carries over directly into this problem.
	
	We let $\mathcal{T}_{h>0}$ denote a decomposition of $\Omega$ into rectangles (or, more generally, quadrilaterals) of size $h$ such that intersection of the closures of any two distinct elements is empty or else contains either a single vertex or an entire edge.  Over each single mesh entity $K$, we define $\mathcal{P}_k(K)$ to be the space of polynomials over $K$ of total degree no greater than $k$.  This space has dimension of
	\begin{equation}
		\dim \mathcal{P}_k(K) = \binom{k+n}{n}
		= \begin{cases} \frac{(k+1)(k+2)}{2} &  n=2, \\
			\frac{(k+1)(k+2)(k+3)}{6}& n=3.
		\end{cases}
	\end{equation}
	We also let
	$\mathcal{Q}_k(K)$ be the set of polynomials of degree $k$ in each variable separately.  The dimensions of these spaces satisfy:
	\begin{equation}
		\dim \mathcal{Q}_k(K) = \left(k+1\right)^n.
	\end{equation}
	
	Our main interest here is the \emph{serendipity} space~\cite{Arnold}, which includes all of $\mathcal{P}_k(K)$ and hence offer optimal approximation properties, but has much lower dimensionality than $\mathcal{Q}_k(K)$.
	Hence, they offer comparable accuracy to standard methods, but at a lower cost.
	When $n=2$,  the local serendipity space is defined by:
	\begin{equation}
		\label{eq:sk}
		\mathcal{S}_k(K) = \mathcal{P}_k(K) + \text{span}\{ x^k y, x y^k \},
	\end{equation}
	for each $k \geq 2$.  This has dimension of $\dim \mathcal{P}_k(K) + 2 = \frac{1}{2}(k^2-k+1)$.  For $k=1$, the serendipity space $\mathcal{S}_k(K)$ is taken to coincide with $\mathcal{Q}_1(K)$.  Members of the space may be parameterized by their vertex values, moments of degree $k-1$ along each edge, and for $k\geq 4$, moments against $\mathcal{P}_{k-4}(K)$ as depicted in \cref{fig:dofs2d} for degrees 2 and 3.
	
	\begin{figure}[htbp]
		\qquad\qquad
		\begin{subfigure}[t]{0.4\textwidth}
			\begin{center}
				\begin{tikzpicture}[scale=1.1]
					
					\foreach \x/\y/\i in
					{-1/1/a,1/1/b,1/-1/c,-1/-1/d} {
						\coordinate (\i) at (\x, \y);
					}
					
					\foreach \i/\j in {a/b,b/c,c/d,d/a} {
						\draw (\i)--(\j);
					}
					
					\foreach \pt in {a,b,c,d}{
						\draw[fill] (\pt) circle (0.05);
					}
					
					\foreach \i/\j in {a/b,b/c,c/d,d/a} {
						\draw[fill] ($0.5*(\i)+0.5*(\j)$) circle (0.05);
					}
					
				\end{tikzpicture}
			\end{center}
		\end{subfigure}
		\begin{subfigure}[t]{0.4\textwidth}
			\begin{center}
				\begin{tikzpicture}[scale=1.1]
					
					\foreach \x/\y/\i in
					{-1/1/a,1/1/b,1/-1/c,-1/-1/d} {
						\coordinate (\i) at (\x, \y);
					}
					
					\foreach \i/\j in {a/b,b/c,c/d,d/a} {
						\draw (\i)--(\j);
					}
					
					\foreach \pt in {a,b,c,d}{
						\draw[fill] (\pt) circle (0.05);
					}
					
					\foreach \i/\j in {a/b,b/c,c/d,d/a} {
						\foreach \alph/\bet in {0.333/0.667,0.667/0.333} {
							\draw[fill] ($\alph*(\i)+\bet*(\j)$) circle (0.05);
						}
					} 
				\end{tikzpicture}
			\end{center}
		\end{subfigure}\\
		
		\qquad\qquad
		\begin{subfigure}[t]{0.4\textwidth}
			\begin{center}
				\begin{tikzpicture}[scale=1]
					\foreach \x/\y/\z/\i in
					{-1/-1/1/a,1/-1/1/b,1/1/1/c,-1/1/1/d, -1/-1/-1/e, 1/-1/-1/f, 1/1/-1/g, -1/1/-1/h} {
						\coordinate (\i) at (\x, \y, \z);
					}
					\foreach \i/\j in {a/b,b/c,c/d,d/a,b/f,f/g,g/c,d/h,g/h} {
						\draw (\i)--(\j);
					}
					
					\foreach \pt in {a,b,c,d,f,g,h}{
						\draw[fill] (\pt) circle (0.05);
					}      
					
					\foreach \i/\j in {a/b,b/c,c/d,d/a,f/g,g/h,b/f,c/g,d/h} {
						\draw[fill] ($0.5*(\i)+0.5*(\j)$) circle (0.05);
					}
				\end{tikzpicture}
			\end{center}
			
		\end{subfigure}
		\begin{subfigure}[t]{0.4\textwidth}
			\begin{center}
				\begin{tikzpicture}[scale=1]
					\foreach \x/\y/\z/\i in
					{-1/-1/1/a,1/-1/1/b,1/1/1/c,-1/1/1/d, -1/-1/-1/e, 1/-1/-1/f, 1/1/-1/g, -1/1/-1/h} {
						\coordinate (\i) at (\x, \y, \z);
					}
					\foreach \i/\j in {a/b,b/c,c/d,d/a,b/f,f/g,g/c,d/h,g/h} {
						\draw (\i)--(\j);
					}
					
					\foreach \pt in {a,b,c,d,f,g,h}{
						\draw[fill] (\pt) circle (0.05);
					}      
					
					\foreach \i/\j in {a/b,b/c,c/d,d/a,f/g,g/h,b/f,c/g,d/h} {
						\draw[fill] ($0.333*(\i)+0.667*(\j)$) circle (0.05);
						\draw[fill] ($0.667*(\i)+0.333*(\j)$) circle (0.05);
					}
				\end{tikzpicture}
			\end{center}
		\end{subfigure}

		\caption{Canonical degrees of freedom for $\mathcal{S}_2(K)$ (left) and $\mathcal{S}_3(K)$ (right).}
		\label{fig:dofs2d}

	\end{figure}
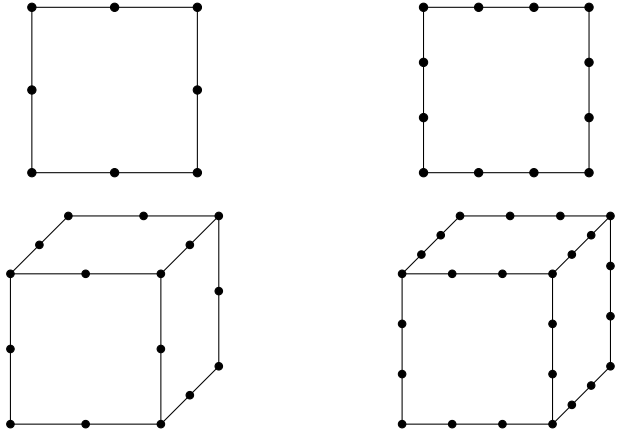

	Following~\cite{Arnold}, the generalization of~\eqref{eq:sk} to higher spatial dimensions uses the notion of \emph{superlinear degree}.  A monomial in $n$ variables may be written as $p = \Pi_{i=1}^n x_i^{\alpha_i}$ for integers $\alpha_i \geq 0$, and then the superlinear degree is given by the sum of each $\alpha_i$ that is at least 2:
	\begin{equation}
		\text{deg}_2(p) = \sum_{\stackrel{i=1}{\alpha_i > 1}}^n \alpha_i.
	\end{equation}
	
	Then, the serendipity space of degree $k$ is defined as all monomials with superlinear degree at most $k$:
	\begin{equation}
		\mathcal{S}_k(K) = \{ p \in \mathcal{Q}_k(K) : \text{deg}_2(p) \leq k \}.
	\end{equation}
	
	This definition coincides with that given in \eqref{eq:sk}. In three variables, the space $\mathcal{S}_2$ is $\mathcal{P}_2$ together with the span of:
	\[
	\{ xyz, x^2y, x^2z, x^2 yz, x y^2, y^2 z, x y^2 z, x z^2, y z^2, xy z^2 \},
	\]
	and $\dim \mathcal{S}_2(K) = 20$.  While larger than $\dim{P}_2(K) = 10$, it still is smaller than $\dim{Q}_2(K) = 27$.
	
	The dimension of the general serendipity spaces does not admit a simple formula in more than two dimensions, but it is shown in~\cite{Arnold} to be
	\begin{equation}
		\dim \mathcal{S}_k(K) = \sum_{d=0}^{\min(n,\myfloor{k/2})} 2^{n-d}\binom{n}{d}\binom{k-d}{d}.
	\end{equation}

	\begin{table}
		\begin{center}
			\begin{tabular}{c|c}
				$n=2$ & $n=3$ \\ \hline
				\begin{tabular}{r|ccccc}
					$k$: & 1 & 2 & 3 & 4 & 5 \\ \hline
					$\dim \mathcal{P}_{k}$ & 3 & 6 & 10 & 15 & 21 \\
					$\dim \mathcal{S}_{k}$ & 4 & 8 & 12 & 17 & 23 \\
					$\dim \mathcal{Q}_{k}$ & 4 & 9 & 16 & 25 & 36
				\end{tabular} &
				\begin{tabular}{r|ccccc}
					$k$: & 1 & 2 & 3 & 4 & 5 \\ \hline
					$\dim \mathcal{P}_{k}$ & 4 & 10 & 20 & 35 & 56  \\
					$\dim \mathcal{S}_{k}$ & 8 & 20 & 32 & 50 & 74  \\
					$\dim \mathcal{Q}_{k}$ & 8 & 27 & 64 & 125 & 216        
				\end{tabular}
			\end{tabular}
		\end{center}
		\label{table:dims}
		\caption{Dimension of local polynomial spaces in two and three variables.}
	\end{table}

	The global finite element space based on serendipity elements is 
	\begin{equation}
		V_{h,k} = \left\{ v \in C^0(\Omega) : v|_{K} \in \mathcal{S}_k(K) \text{ for each } K \in \mathcal{T}_h \right\}.
	\end{equation}
	
	By means of the inner product $(.,.)$ defined as the one of $L^2(V)$, the bilinear form $a$ in~\eqref{ref1}  induces an operator $A_{h,k} : V_{h,k} \rightarrow V_{h, k}^\prime$ by
	\begin{equation}
		\left(A_{h,k} u , v\right) = a(u, v),
	\end{equation}
	where the duality pairing is that between $V_{h,k}$ and its dual inherited from the inclusion $V_{h,k} \hookrightarrow H_0^1(\Omega)$.

	\section{Developing iterative methods}\label{sec:devitermeth}
	In this paper, we consider optimal preconditioners for serendipity finite element discretizations of our model problem. 
	These are based on additive Schwarz or subspace decomposition techniques that approximate the inverse of an operator by restricting that operator to a collection of subspaces and summing the inverses of those restrictions.
	In our case, we rely primarily on subspaces consisting of patches of cells sharing a common vertex in a finite element mesh.
	Such techniques are useful to obtain degree-independence for $\mathcal{Q}_k$ and $\mathcal{P}_k$ spaces~\cite{Pavarino1993/94,Schoeberl:2008}, and are also required for defining multigrid smoothers in $\mathcal{H}(\text{div})$ and $\mathcal{H}(\text{curl})$, even for low-order spaces~\cite{arnold2000multigrid, ArnoldFalkWinther}.
	We define these smoothers, which give degree-independent conditioning estimates for serendipity spaces, in~\cref{patchsmoothers}.
	In order to obtain a preconditioner that is also independent of the mesh spacing, we can include a low-order subspace on the original mesh, as described in~\cref{subsec:asloworder}.
	Alternatively, we can use the patch smoother on each level of a geometric multigrid hierarchy, as we describe in~\cref{ssec:mg}.
	
	The cost of building and solving these local patch problems is the dominant cost in the algorithms we consider.
	Since they solve finite element operators over a (very small) mesh, the local systems contain sparsity, but they are small enough that storing and working with dense matrices may be preferable in practice.
	In either case, using $\mathcal{S}_k$ rather than $\mathcal{Q}_k$ elements can greatly reduce the size of these local problems and hence prove favorable for run-time.
	
	\subsection{Patch smoothers}\label{patchsmoothers}
	We let $\{\mathbf{v}_i\}_{i=1}^N$ denote the set of interior vertices of the mesh, i.e. those vertices that do not belong to any boundary edges of the mesh, and let
	$\Omega^i$ denote the set of mesh cells of which $\mathbf{v}_i$ is a member of the closure.  Typically, this will consist of a patch of 4 rectangles in 2D or 8 boxes in 3D (see \cref{fig:patches}).  
	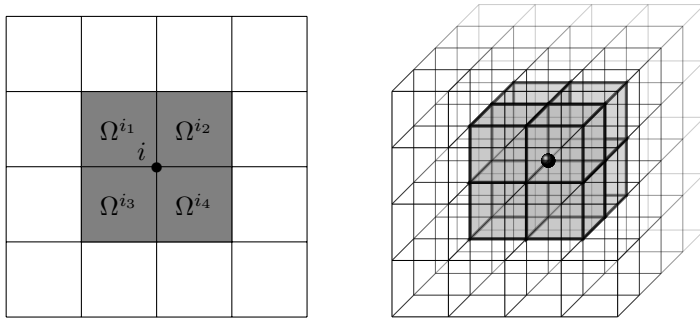
\begin{figure}[htbp]
		\qquad\qquad
		\begin{subfigure}[t]{0.4\textwidth}
			\begin{center}
				\begin{tikzpicture}[scale=2.0]
					\draw[step=0.5cm] (-1,-1) grid (1,1);
					\node[rectangle,draw=black, minimum size=1cm, fill=gray] at (-0.25,+0.25){}; 
					\node at (-0.25,+0.25) {$\Omega^{i_1}$};
					\node[rectangle,draw=black, minimum size=1cm, fill=gray] at (+0.25,+0.25){}; 
					\node at (+0.25,+0.25) {$\Omega^{i_2}$};
					\node[rectangle,draw=black, minimum size=1cm, fill=gray] at (-0.25,-0.25){}; 
					\node at (-0.25,-0.25) {$\Omega^{i_3}$};
					\node[rectangle,draw=black, minimum size=1cm, fill=gray] at (+0.25,-0.25){}; 
					\node at (+0.25,-0.25) {$\Omega^{i_4}$};
					\node at (+0,-0.01) {$\bullet$};
					\node at (-0.1,+0.1) {$i$};
				\end{tikzpicture}
			\end{center}
			
		\end{subfigure}
		\begin{subfigure}[t]{0.4\textwidth}
			\begin{center}
				\begin{tikzpicture}[scale=0.75,
					thin,cube/.style={very thick,fill ,fill=gray!50}]
					
					\foreach \x in {1,2}
					\foreach \y in {1,2}
					\foreach \z in {1,2}{
						\filldraw[cube,opacity=.20 *  (\z + 1)  ] (\x,\y,\z) -- (\x,\y+1,\z) -- (\x+1,\y+1,\z) -- (\x+1,\y,\z) -- cycle;
						
						\filldraw[cube,opacity=.20 *  (\z + 1)  ] (\x,\y,\z) -- (\x,\y+1,\z) -- (\x,\y+1,\z+1) -- (\x,\y,\z+1) -- cycle;
						
						\filldraw[cube,opacity=.20 *  (\z + 1) ]  (\x,\y,\z) --  (\x+1,\y,\z) --  (\x+1,\y,\z+1) --  (\x,\y,\z+1) -- cycle;
						
						\filldraw[cube,opacity=.20 *  (\z + 1)  ] (\x+1,\y,\z) -- (\x+1,\y+1,\z) -- (\x+1,\y+1,\z+1) -- (\x+1,\y,\z+1) -- cycle;
						
						\filldraw[cube,opacity=.20 *  (\z + 1)  ] (\x,\y+1,\z)  -- (\x+1,\y+1,\z) -- (\x+1,\y+1,\z+1) -- (\x,\y+1,\z+1) -- cycle;
						
						\filldraw[cube,opacity=.20 *  (\z + 1)  ] (\x,\y,\z+1) -- (\x,\y+1,\z+1)  -- (\x+1,\y+1,\z+1) -- (\x+1,\y,\z+1) -- cycle;

					}
					
					\shade[ball color = black, opacity = .50] (2,2,2) circle (0.125cm); 			
					
					\foreach \x in {0,1,2,3,4}
					\foreach \y in {0,1,2,3,4}
					\foreach \z in {0,1,2,3,4}{
						\ifthenelse{  \lengthtest{\x pt < 4pt}  }{
							\draw[opacity= .20 *  (\z + 1 ) ] (\x,\y,\z) -- (\x+1,\y,\z);
						}{}
						\ifthenelse{  \lengthtest{\y pt < 4pt}  }{
							\draw[opacity= .20 *  (\z + 1 ) ]  (\x,\y,\z) -- (\x,\y+1,\z);
							
						}{}
						\ifthenelse{  \lengthtest{\z pt < 4pt}  }{
							\draw[opacity=.20 * (\z + 1 ) ]  (\x,\y,\z) -- (\x,\y,\z+1);
						}{}
						
					}
					
					%
					
				\end{tikzpicture}
			\end{center}
		\end{subfigure}
		\caption{Example of patch cells in 2D (left) and 3D (right).}
		\label{fig:patches}
	\end{figure}


Associated with each vertex, we define a localized space $V_i \subset H^1_0(\Omega)$ of serendipity elements on each $K \in \mathcal{T}_h$, but vanishing outside of $\Omega^i$:
\begin{equation}
  V_i = \left\{ v \in V_{h,k} : \text{supp}(v) \subseteq \Omega^i \right\}.
\end{equation}
Note that these patch-based subspaces are labeled using subscripts.
	Later, in~\cref{ssec:mg}, we will also introduce function spaces based on a mesh hierarchy, and we shall label those with superscripts instead.
	Examples of vertex patch spaces are shown in~\cref{fig:patches}.  These pictures highlight the considerable reduction in dimensionality that serendipity elements afford.  
	We can also include a comparison to patch sizes for simplicial cells with $\mathcal{P}_k$ elements, since rectangular cells can always be so tesselated.  While $\mathcal{P}_k$ will have fewer degrees of freedom per cell than either $\mathcal{S}_k$ or $\mathcal{Q}_k$, simplicial meshes have many more cells sharing a vertex than do quad/hex meshes. While quad meshes typically have 4 cells sharing a vertex and hex meshes 8, triangular meshes will have 6-7 triangles sharing a vertex and tetrahedral meshes more like 20-30. \cref{patchdofs} shows that, with some representative choices, simplex meshes can offer smaller patch sizes than $\mathcal{Q}_k$ elements but not as small as $\mathcal{S}_k$.  We plot the patch size as a function of degree in \cref{patchdofs}.

	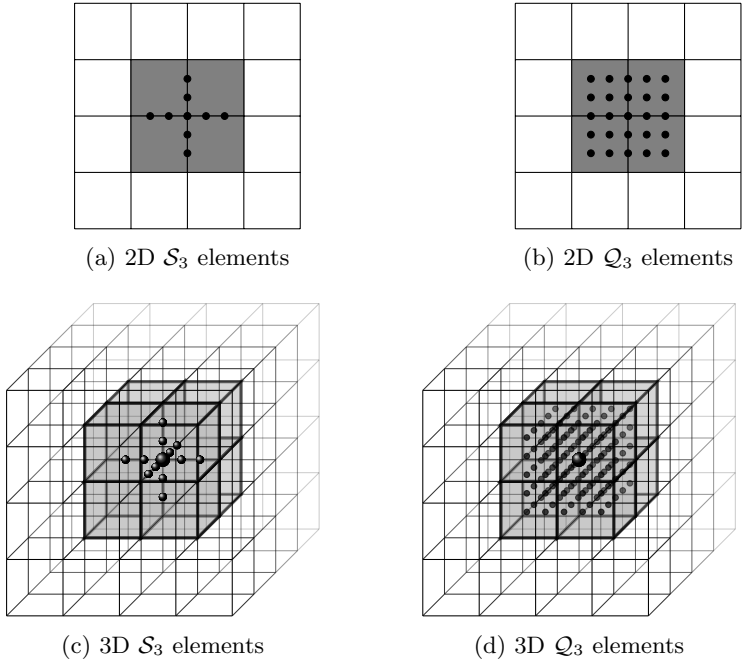
\begin{figure}[htbp]
		\begin{subfigure}[t]{0.45\textwidth}
			\begin{center}
				\begin{tikzpicture}[scale=1.5]
					\draw[step=0.5cm] (-1,-1) grid (1,1);
					\node[rectangle,draw=black, minimum size=0.75cm, fill=gray] at (-0.25,+0.25){};
					\node[rectangle,draw=black, minimum size=0.75cm, fill=gray] at (+0.25,+0.25){};
					\node[rectangle,draw=black, minimum size=0.75cm, fill=gray] at (-0.25,-0.25){};
					\node[rectangle,draw=black, minimum size=0.75cm, fill=gray] at (+0.25,-0.25){};
					\draw[fill] (+0,-0.0) circle (0.03);
					\foreach \x in {-1, 0, 1}{
						\foreach \y in {-1, 0, 1} {
							\pgfmathparse{ abs( abs(\y*\x) - 1) < 0.001 ? int(1) : int(0) }
							\ifnum\pgfmathresult=1 {}
							\else
							{\draw[fill] (\x*0.165,\y*0.165) circle (0.03);
							}
							\fi 	
						}
					}
					
					\foreach \x/\y in {-2/0, 0/2, 0/-2,2/0}{
						\pgfmathparse{ abs( abs(\y*\x) - 1) < 0.001 ? int(1) : int(0) }
						\ifnum\pgfmathresult=1 {}
						\else
						{\draw[fill] (\x*0.165,\y*0.165) circle (0.03);
						}
						\fi 	
						
					}			
				\end{tikzpicture}
			\end{center}
			\caption{2D $\mathcal{S}_3$ elements}
			\label{fig:2dpatchess3}
		\end{subfigure}
		\begin{subfigure}[t]{0.45\textwidth}
			\begin{center}
				\begin{tikzpicture}[scale=1.5]
					\draw[step=0.5cm] (-1,-1) grid (1,1);
					\node[rectangle,draw=black, minimum size=0.75cm, fill=gray] at (-0.25,+0.25){}; 
					\node[rectangle,draw=black, minimum size=0.75cm, fill=gray] at (+0.25,+0.25){}; 
					\node[rectangle,draw=black, minimum size=0.75cm, fill=gray] at (-0.25,-0.25){}; 
					\node[rectangle,draw=black, minimum size=0.75cm, fill=gray] at (+0.25,-0.25){}; 
					\draw[fill] (+0,-0.0) circle (0.03);
					
					\foreach \x in {-2,-1, 0, 1, 2}{
						\foreach \y in {-2,-1, 0, 1, 2} {
							\draw[fill] (\x*0.165,\y*0.165) circle (0.03);  
						}
					}
				\end{tikzpicture}
			\end{center}
			\caption{2D $\mathcal{Q}_3$ elements}
			\label{fig:2dpatchesq3}
		\end{subfigure} \\
		\begin{subfigure}[t]{0.4\textwidth}
			\begin{center}
				\begin{tikzpicture}[scale=0.75,
					thin,cube/.style={very thick,fill ,fill=gray!50}]
					\foreach \x in {1,2}
					\foreach \y in {1,2}
					\foreach \z in {1,2}{
						\filldraw[cube,opacity=.20 *  (\z + 1)  ] (\x,\y,\z) -- (\x,\y+1,\z) -- (\x+1,\y+1,\z) -- (\x+1,\y,\z) -- cycle;
						\filldraw[cube,opacity=.20 *  (\z + 1)  ] (\x,\y,\z) -- (\x,\y+1,\z) -- (\x,\y+1,\z+1) -- (\x,\y,\z+1) -- cycle;
						
						\filldraw[cube,opacity=.20 *  (\z + 1) ]  (\x,\y,\z) --  (\x+1,\y,\z) --  (\x+1,\y,\z+1) --  (\x,\y,\z+1) -- cycle;

						\filldraw[cube,opacity=.20 *  (\z + 1)  ] (\x+1,\y,\z) -- (\x+1,\y+1,\z) -- (\x+1,\y+1,\z+1) -- (\x+1,\y,\z+1) -- cycle;
						
						\filldraw[cube,opacity=.20 *  (\z + 1)  ] (\x,\y+1,\z)  -- (\x+1,\y+1,\z) -- (\x+1,\y+1,\z+1) -- (\x,\y+1,\z+1) -- cycle;
						
						\filldraw[cube,opacity=.20 *  (\z + 1)  ] (\x,\y,\z+1) -- (\x,\y+1,\z+1)  -- (\x+1,\y+1,\z+1) -- (\x+1,\y,\z+1) -- cycle;

					}
					

					\foreach \x in {0,1,2,3,4}
					\foreach \y in {0,1,2,3,4}
					\foreach \z in {0,1,2,3,4}{
						\ifthenelse{  \lengthtest{\x pt < 4pt}  }{
							\draw[opacity= .20 *  (\z + 1 ) ] (\x,\y,\z) -- (\x+1,\y,\z);
						}{}
						\ifthenelse{  \lengthtest{\y pt < 4pt}  }{
							\draw[opacity= .20 *  (\z + 1 ) ]  (\x,\y,\z) -- (\x,\y+1,\z);
							
						}{}
						\ifthenelse{  \lengthtest{\z pt < 4pt}  }{
							\draw[opacity=.20 * (\z + 1 ) ]  (\x,\y,\z) -- (\x,\y,\z+1);
						}{}
						
					}
					
					\shade[ball color = black, opacity=.20 * (2 + 1 ) ] (2,2,2) circle (0.125cm); 
					
					\foreach \i in {-2,-1,1,2}{
						\shade[ball color = black,opacity=.20 * (2 + 1 ) ] (2+0.33*\i,2,2) circle (0.07cm);
						\shade[ball color = black, opacity=.20 * (2 + 1 ) ] (2,2+0.33*\i,2) circle (0.07cm); 
						\shade[ball color = black, opacity=.20 * (2 + 1 + \i) ] (2,2,2+0.33*\i) circle (0.07cm);
					}
					
				\end{tikzpicture}
			\end{center}
			\caption{3D $\mathcal{S}_3$ elements}
		\end{subfigure}
		\begin{subfigure}[t]{0.45\textwidth}
			\begin{center}
				\begin{tikzpicture}[scale=0.75,
					thin,cube/.style={very thick,fill ,fill=gray!50}]
					
					\foreach \x in {1,2}
					\foreach \y in {1,2}
					\foreach \z in {1,2}{
						\filldraw[cube,opacity=.20 *  (\z + 1)  ] (\x,\y,\z) -- (\x,\y+1,\z) -- (\x+1,\y+1,\z) -- (\x+1,\y,\z) -- cycle;
						\filldraw[cube,opacity=.20 *  (\z + 1)  ] (\x,\y,\z) -- (\x,\y+1,\z) -- (\x,\y+1,\z+1) -- (\x,\y,\z+1) -- cycle;
						\filldraw[cube,opacity=.20 *  (\z + 1) ]  (\x,\y,\z) --  (\x+1,\y,\z) --  (\x+1,\y,\z+1) --  (\x,\y,\z+1) -- cycle;
						\filldraw[cube,opacity=.20 *  (\z + 1)  ] (\x+1,\y,\z) -- (\x+1,\y+1,\z) -- (\x+1,\y+1,\z+1) -- (\x+1,\y,\z+1) -- cycle;
						\filldraw[cube,opacity=.20 *  (\z + 1)  ] (\x,\y+1,\z)  -- (\x+1,\y+1,\z) -- (\x+1,\y+1,\z+1) -- (\x,\y+1,\z+1) -- cycle;
						\filldraw[cube,opacity=.20 *  (\z + 1)  ] (\x,\y,\z+1) -- (\x,\y+1,\z+1)  -- (\x+1,\y+1,\z+1) -- (\x+1,\y,\z+1) -- cycle;
					}
					\foreach \x in {0,1,2,3,4}
					\foreach \y in {0,1,2,3,4}
					\foreach \z in {0,1,2,3,4}{
						\ifthenelse{  \lengthtest{\x pt < 4pt}  }{
							\draw[opacity= .20 *  (\z + 1 ) ] (\x,\y,\z) -- (\x+1,\y,\z);
						}{}
						\ifthenelse{  \lengthtest{\y pt < 4pt}  }{
							\draw[opacity= .20 *  (\z + 1 ) ]  (\x,\y,\z) -- (\x,\y+1,\z);
							
						}{}
						\ifthenelse{  \lengthtest{\z pt < 4pt}  }{
							\draw[opacity=.20 * (\z + 1 ) ]  (\x,\y,\z) -- (\x,\y,\z+1);
						}{}
					}
					\foreach \x in {4,5,6,7,8}
					\foreach \y in {4,5,6,7,8}
					\foreach \z in {4,5,6,7,8}{
						\ifthenelse{\x=6 \and \y=6 \and \z=6 }{		
							\shade[ball color = black, opacity=.20 * (\z*0.33 + 1 ) ] (2,2,2) circle (0.125cm); 		}{
							\draw[fill= black,  opacity=.20 * (\z*0.33 + 1 ) ] (\x*0.33,\y*0.33,\z*0.33) circle (0.05cm); } 
					}
				\end{tikzpicture}
			\end{center}
			\caption{3D $\mathcal{Q}_3$ elements}
		\end{subfigure}
		\caption{Patch cells and DoFs for in a typical patches using $\mathcal{S}_3$  and $\mathcal{Q}_3$ elements in 2D are shown in (a) and (b).
			Note that in 2D each $\mathcal{Q}_3$ patch has 25 internal degrees of freedom versus 9 for each $\mathcal{S}_3$ patch.
			Subfigures (c) and (d) show that the reduction is even greater in 3D, going from 125 down to only 13.}
		\label{patches}
	\end{figure}
	
Since $V_i \subset V_{h,k}$ for each $i$, the natural lifting operator $R_i: V_i \rightarrow V_{h,k}$ is trivial, although its adjoint shall appear in defining certain local operators.  We also use the standard norms and inner products available on $V_{h,k}$ on its subspaces as needed.
	
	\begin{figure}[htbp]
		\begin{subfigure}[t]{0.45\textwidth}
			\begin{tikzpicture}[scale=0.65]
				\begin{axis}[
					legend cell align=left,
					legend pos=north west,
					xlabel={Polynomial degree $k$},
					ylabel={DoFs per patch},
					ymode=log, log basis y={10}
					]
					\addplot[dotted,mark=triangle,mark options={solid}] table
					[x=r, y=Pr, col sep=comma] {patchsize_2.csv};
					\addlegendentry{$\mathcal{P}_k$ patch}
					\addplot[dotted,mark=o, mark options={solid}] table
					[x=r, y=Sr, col sep=comma] {patchsize_2.csv};
					\addlegendentry{$\mathcal{S}_k$ patch}
					\addplot[dotted,mark=*,mark options={solid}] table
					[x=r, y=Qr, col sep=comma] {patchsize_2.csv};
					\addlegendentry{$\mathcal{Q}_k$ patch}
				\end{axis}
			\end{tikzpicture}
			\caption{2D patches, with 4 cells and 4 internal edges per quadilateral patch.  For comparison, we include triangular $P_k$ patches assuming 6 cells and edges in each patch.}
			\label{2dpatch}
		\end{subfigure}
		\hfill
		\begin{subfigure}[t]{0.45\textwidth}
			\begin{tikzpicture}[scale=0.65]
				\begin{axis}[
					legend cell align=left,
					legend pos=north west,
					xlabel={Polynomial degree $k$},
					ylabel={DoFs per patch},
					ymode=log, log basis y={10}
					]
					\addplot[dotted,mark=triangle,mark options={solid}] table
					[x=r, y=Pr, col sep=comma] {patchsize_3.csv};
					\addlegendentry{$\mathcal{P}_k$ patch}
					\addplot[dotted,mark=o, mark options={solid} ] table
					[x=r, y=Sr, col sep=comma] {patchsize_3.csv};
					\addlegendentry{$\mathcal{S}_k$ patch}
					\addplot[dotted,mark=*,mark options={solid}] table
					[x=r, y=Qr, col sep=comma] {patchsize_3.csv};
					\addlegendentry{$\mathcal{Q}_k$ patch}
				\end{axis}
			\end{tikzpicture}
			\caption{3D patches with 8
				cells, 12 internal faces, and 6 internal edges per hexahedral
				patch.  For comparison, we include $P_k$ patches with 24 cells, 36 internal faces, and 14 edges per patch.}
			\label{3dpatch}
		\end{subfigure}
		\caption{Number of DoFs in various element patches as a function of polynomial order $k$ after imposing Dirichlet boundary conditions.  The $\mathcal{P}_k$ patches correspond to typical patches in a simplicial mesh, while $\mathcal{Q}_k$ and $\mathcal{S}_k$ correspond to regular patches in a quad/hex mesh.}
		\label{patchdofs}
	\end{figure}
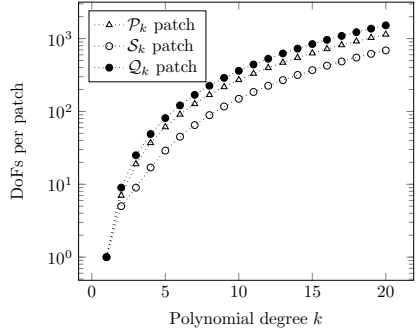
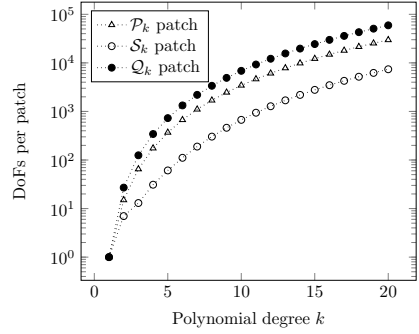
	
	The patch spaces induce a decomposition of the global space:
	\begin{equation}\label{decomp}
		V_{h,k} = \sum_{i=1}^N R_i V_i = \sum_{i=1}^N V_i.
	\end{equation}
	
	On each localized subspace, 
        we define the restricted operators $A_i$ on $V_i$ via a Galerkin approach:
	\begin{equation}\label{restoper2}
		A_i = (R_i)^T A_h R_i,
	\end{equation}
	where $(\cdot)^T$ denotes the adjoint with respect to the $L^2$ inner product.
	
	We now define a preconditioning operator $C_{h,k}$ as follows.   Let $d_{h,k} \in V_{h,k}$ be given.  Then, on each patch, we let $w_i \in V_i$ be the solution to the variational problem
	\begin{equation}
		a_i(w_i, v_i) = (d_{h,k}, v_i), \ \ \ \forall v_i \in V_i.
	\end{equation}
	We then define $w_{h,k} \in V_{h,k}$ by summation:
	\begin{equation}
		w_{h,k} = \sum_{i=1}^N R_i w_i = \sum_{i=1}^N w_i.
	\end{equation}
	Now, we let $C_{h,k}$ be the mapping sending $w_{h,k}$ to $d_{h, k}$.  Its inverse
	is directly computable by solving local variational problems on each vertex patch and summing the liftings of the results.
	Equivalently, we can write
	\begin{equation}\label{asprecond}
		C_{h,k}^{-1} = \sum_{i=1}^N R_i A_i^{-1} (R_i)^T.
	\end{equation}
	
	Among many options in the literature for analyzing such additive Schwarz me\-thods, we follow Sch\"oberl~\cite{Schoeberl1999}.
	We note that our setting -- using patches of the original mesh to deal with high polynomial degree -- naturally leads to a Galerkin formulation with nested subspaces and hence a simpler analysis than required in some contexts.
	
	\begin{theorem}[{\cite[Additive Schwarz Lemma]{Schoeberl1999}}]
		Let us define the splitting norm
		$$
		\vertiii{u_{h,k}}^{2}:=\inf _{u_{h,k}=\sum\limits_{\mathclap{u_{i} \in V_{i}} }R_{i} u_{i}}\sum_{i=1}^{N} \left(A_{i}u_{i}, u_{i}\right)
		$$
		on $V_{h,k}$. It is equal to the norm $\left\|u_{h,k}\right\|_{C_{h,k}}:=\left(C_{h,k} u_{h,k}, u_{h,k}\right)_{h,k}^{1 / 2}$ generated by the additive Schwarz preconditioner (\ref{asprecond}),
		i.e. there holds
		$$
		\vertiii{u_{h,k}} =\left\|u_{h,k}\right\|_{C_{h,k}} \quad \forall u_{h,k} \in V_{h,k}.
		$$
	\end{theorem}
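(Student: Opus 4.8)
The plan is to prove the two inequalities $\vertiii{u_{h,k}} \le \left\|u_{h,k}\right\|_{C_{h,k}}$ and $\left\|u_{h,k}\right\|_{C_{h,k}} \le \vertiii{u_{h,k}}$ separately, following the standard abstract additive Schwarz argument specialized to the Galerkin setting in which the local forms $a_i$ are exact restrictions of $a$.  First I would record the preliminaries: each $A_i$ is symmetric positive definite on $V_i$ because $a$ is coercive and $a_i$ is its exact restriction via \eqref{restoper1}, so $C_{h,k}^{-1}$ in \eqref{asprecond} is SPD, $C_{h,k}$ and the norm $\left\|\cdot\right\|_{C_{h,k}}$ are well defined, and the infimum defining $\vertiii{\cdot}$ is taken over a nonempty set by \eqref{decomp}.

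For the bound $\vertiii{u_{h,k}}^{2} \le \left\|u_{h,k}\right\|_{C_{h,k}}^{2}$, the key is to exhibit one admissible splitting that attains the right-hand side.  Given $u_{h,k}$, I would set $g := C_{h,k} u_{h,k}$ and choose the components $u_i := A_i^{-1} R_i^T g \in V_i$; by \eqref{asprecond} these form an admissible splitting, $\sum_i R_i u_i = C_{h,k}^{-1} g = u_{h,k}$, and using the adjoint relation \eqref{restoper2} one computes
\[
\sum_i (A_i u_i, u_i) = \sum_i (R_i^T g, A_i^{-1} R_i^T g) = \Bigl( g, \sum_i R_i A_i^{-1} R_i^T g \Bigr) = (g, C_{h,k}^{-1} g) = (C_{h,k} u_{h,k}, u_{h,k}).
\]
Since the splitting norm is an infimum over all such splittings, this already yields $\vertiii{u_{h,k}}^{2} \le \left\|u_{h,k}\right\|_{C_{h,k}}^{2}$.

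For the reverse inequality I would take an arbitrary admissible splitting $u_{h,k} = \sum_i R_i u_i$, expand $(C_{h,k} u_{h,k}, u_{h,k}) = \sum_i (R_i^T C_{h,k} u_{h,k}, u_i)_i = \sum_i (A_i w_i, u_i)_i$ where $w_i := A_i^{-1} R_i^T C_{h,k} u_{h,k}$ are the distinguished components from the previous step, and then apply the Cauchy--Schwarz inequality twice — once in each local inner product $(A_i \cdot, \cdot)_i$ and once over the sum in $i$ — to obtain
\[
(C_{h,k} u_{h,k}, u_{h,k}) \le \Bigl( \sum_i (A_i w_i, w_i)_i \Bigr)^{1/2} \Bigl( \sum_i (A_i u_i, u_i)_i \Bigr)^{1/2}.
\]
The first factor on the right equals $\left\|u_{h,k}\right\|_{C_{h,k}}$ by the computation above, so dividing through and then taking the infimum over all admissible splittings gives $\left\|u_{h,k}\right\|_{C_{h,k}}^{2} \le \vertiii{u_{h,k}}^{2}$; combining the two bounds proves the asserted identity.

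The main obstacle I anticipate is not conceptual but one of bookkeeping: one must keep three pairings straight — the global duality pairing on $V_{h,k}$, the local products $(\cdot,\cdot)_i$, and the prolongations $R_i$ together with their adjoints $R_i^T$ — and check that the identifications are consistent so that $A_i = R_i^T A_{h,k} R_i$ from \eqref{restoper2} really represents the local form through $a_i(u_i, v_i) = (A_i u_i, v_i)_i$.  Once that setup is pinned down, the choice of test decomposition $u_i = A_i^{-1} R_i^T C_{h,k} u_{h,k}$ and the double application of Cauchy--Schwarz are routine; indeed the statement is precisely the abstract lemma of \cite{Schoeberl1999}, so the real task is to confirm that its hypotheses hold verbatim for our serendipity spaces and vertex-patch decomposition.
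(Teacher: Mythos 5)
Your argument is correct, but note that the paper does not actually prove this statement: it is imported verbatim from Sch\"oberl's thesis as a cited lemma, so there is no in-paper proof to compare against. What you have written is the canonical proof of the Additive Schwarz Lemma (essentially the one in \cite{Schoeberl1999} and in the Xu/Toselli--Widlund subspace-correction framework): the particular splitting $u_i = A_i^{-1}R_i^T C_{h,k}u_{h,k}$ is admissible by \eqref{asprecond} and realizes $\sum_i(A_iu_i,u_i) = (C_{h,k}u_{h,k},u_{h,k})$, giving $\vertiii{u_{h,k}}^2 \le \|u_{h,k}\|^2_{C_{h,k}}$; pairing an arbitrary admissible splitting against that distinguished one and applying Cauchy--Schwarz locally and then over the sum gives the reverse bound, so the infimum is in fact attained at the distinguished splitting. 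Both directions check out, and your cautionary remark about the bookkeeping is apt: the identity $(C_{h,k}u_{h,k}, R_iu_i) = (A_i w_i, u_i)_i$ relies on $R_i^T$ being the adjoint with respect to the local inner product $(\cdot,\cdot)_i$ exactly as in \eqref{restoper2}, and on each $A_i$ being SPD on $V_i$ (true here since $a_i$ is the exact Galerkin restriction of the coercive form $a$ to $V_i \subset H^1_0(\Omega^i)$). The only cosmetic mismatch with the statement is the paper's stray upper limit $M$ versus $N$ in the sum, which is a typo in the paper, not a gap in your argument.
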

	
	
	
	At several points in our analysis, we will use the Ritz projection~\cite{Thomee, wheeler1973apriori}, which is orthogonal projection with respect to the $H_0^1$ inner product.  
	As we use the projection in a few different contexts, we include the following general result:
	\begin{proposition}\label{prop:ritz}
          Let $a$ be a symmetric coercive bilinear form on $H_0^1(\mathcal{K})$, and $\mathcal{V}, \mathcal{W}$ be subspaces of $H_0^1(\mathcal{K})$.  Let $\Pi$ be the $a$-orthogonal or Ritz projection from $\mathcal{V}$ onto $\mathcal{W}$ with respect to $a$, such that
          \[
          a(\Pi u, v) = a(u, v), \ \ \ v \in \mathcal{W}, \ \ \ u \in \mathcal{V}.
          \]
          Then
	  \begin{enumerate}[label={(\arabic*)},ref={\theproposition~(\arabic*)}]
			\item $\Pi$ is $H^1$ stable, i.e., there is a constant $C_1$ independent of the mesh size and the polynomial degree such that $||\Pi u||_{H^{1}} \leq \text C_1||u||_{H^{1}}$ for all $u$ in $\mathcal{V}$.\label{prop:ritza}
			\item Let $\mathcal{V}$ be a space with mesh size $h$, then $
			\|u-\Pi u\|^2_{L^2} \leq h^{2}C_2 \left\|u\right\|^2_{H^1}$ for all $u$ in $\mathcal{V}$ with $C_2$ independent of the mesh size and the polynomial degree.\label{prop:ritzb}
		\end{enumerate}
	\end{proposition}
	
	In order to verify the conditions for the additive Schwarz theory hold for serendi\-pity spaces, we will adapt some arguments from Pavarino~\cite{Pavarino1993/94} for the tensor-product case. 
	Since $u \mapsto \|\nabla u\|$ is a seminorm on $\mathcal{S}_k$ and $\mathcal{Q}_k$, we define the quotient spaces $\hat{\mathcal{Q}}_{k}=\mathcal{Q}_k / \mathcal{Q}_{0}$ and $\hat{\mathcal{S}}_{k}=\mathcal{S}_{k} /\mathcal{Q}_{0}$ on which it is a norm. 
	Pavarino defines the uniformly bounded interpolation operator for tensor product spaces:
	\begin{equation}\label{tk}
		T_{k}:\hat{\mathcal{Q}}_{k+1}\left([-1,1]^{2}\right)\rightarrow \hat{\mathcal{Q}}_{k}\left([-1,1]^{2}\right),
	\end{equation}
	that interpolates its input at the tensor product of roots of the polynomial
	$
	\mathscr{L}_{k+1}(x)=\int_{-1}^{x} L_{k}(s) d s,
	$
	where $L_{k}(s)$ is the Legendre polynomial of degree $k$.  
	The essential feature is that the operator has $H^1$ stability estimates independent of the polynomial degree.
	While the serendipity space is contained in the tensor-product space, Pavarino's operator $T_k$ is well-defined.  However, for $f \in \hat{\mathcal{S}}_{k+1}$, the resulting $T_k(f)$ need only lie in $\hat{\mathcal{Q}}_k$ and not necessarily in $\hat{\mathcal{S}}_k$.
	In order to obtain spectral bounds for the preconditioner~\eqref{asprecond}, we provide an interpolation operator from $\hat{\mathcal{S}}_{k+1}\left([-1,1]^{2}\right)$ to $  \hat{\mathcal{S}}_{k}\left([-1,1]^{2}\right)$.
	
	\begin{lemma}\label{interlemma} The interpolation operator $\Psi_{k}: \hat{\mathcal{S}}_{k+1}\left([-1,1]^{2}\right) \rightarrow \hat{\mathcal{S}}_{k}\left([-1,1]^{2}\right)$, defined as $\Psi_{k}=\Pi_{k} \circ T_{k} \circ I|_{S_{k+1}}$, where $\Pi_{k}: \hat{\mathcal{Q}}_{k}\left([-1,1]^{2}\right) \rightarrow \hat{\mathcal{S}}_{k}\left([-1,1]^{2}\right)$ is the Ritz projection and $I|_{S_{k+1}}: \hat{\mathcal{S}}_{k+1}\left([-1,1]^{2}\right) \rightarrow \hat{\mathcal{Q}}_{k+1}\left([-1,1]^{2}\right)$ is the natural embedding, is uniformly bounded in the $\|\cdot\|_{H^{1}}$ norm, i.e.
		\begin{equation}
			\left\|\Psi_{k}(f)\right\|_{H^{1}} \leq   \hat{C} \|f\|_{H^{1}}, \quad \forall f \in \hat{\mathcal{S}}_{k+1}\left([-1,1]^{2}\right).
		\end{equation}
	\end{lemma}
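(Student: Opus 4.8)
The plan is to read $\Psi_k$ as a composition of three operators and bound each factor separately by a constant independent of $k$. Concretely, $\Psi_k = \Pi_k \circ T_k \circ I|_{S_{k+1}}$, where $I|_{S_{k+1}} : \hat{\mathcal{S}}_{k+1} \to \hat{\mathcal{Q}}_{k+1}$ is the inclusion induced by $\mathcal{S}_{k+1}(K) \subseteq \mathcal{Q}_{k+1}(K)$, $T_k : \hat{\mathcal{Q}}_{k+1} \to \hat{\mathcal{Q}}_k$ is Pavarino's interpolation operator from \eqref{tk}, and $\Pi_k : \hat{\mathcal{Q}}_k \to \hat{\mathcal{S}}_k$ is the Ritz projection. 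The composition is well defined because $T_k$ is defined on all of $\hat{\mathcal{Q}}_{k+1}$, so it acts in particular on the embedded image of $\hat{\mathcal{S}}_{k+1}$, while $\Pi_k$ lands in $\hat{\mathcal{S}}_k$ by construction; then $\|\Psi_k f\|_{H^1} \le \|\Pi_k\|\,\|T_k\|\,\|I|_{S_{k+1}}\|\,\|f\|_{H^1}$, and it remains only to bound the three operator norms uniformly in $k$.

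Next I would dispatch each factor. The embedding $I|_{S_{k+1}}$ acts as the identity on functions, hence is an isometry for the $H^1$ norm, so $\|I|_{S_{k+1}}\| = 1$. The operator $T_k$ is uniformly bounded by the cited result of Pavarino, so $\|T_k\| \le C$ with $C$ independent of $k$. For $\Pi_k$ I would invoke \ref{prop:ritza}: the Ritz projection onto $\hat{\mathcal{S}}_k$, viewed as a subspace of $H_0^1([-1,1]^2)$, is $H^1$-stable with some constant $C_1$. Here one should observe that on the quotient spaces the natural norm is the $H^1$-seminorm $u \mapsto \|\nabla u\|_{L^2}$, for which $\Pi_k$ is in fact a contraction, being the orthogonal projection with respect to the inner product $\int_{[-1,1]^2}\nabla u\cdot\nabla v\,dx$; and that on the fixed square $[-1,1]^2$ this seminorm and the quotient $H^1$ norm are equivalent, with constants depending only on the Poincar\'e--Wirtinger constant of the square and hence independent of $k$, so that the seminorm and full-norm versions of the bounds on $T_k$ and $\Pi_k$ are interchangeable.

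Combining the three bounds gives $\|\Psi_k f\|_{H^1} \le C_1 C \|f\|_{H^1}$ for all $f \in \hat{\mathcal{S}}_{k+1}([-1,1]^2)$, which is the claim with $\hat{C} = C_1 C$. The only genuine subtlety --- and the step I would treat most carefully --- is the bookkeeping around the quotient spaces: verifying that $\Pi_k$ is well defined and $H^1$-bounded as a map into $\hat{\mathcal{S}}_k$ with a $k$-independent constant, and that the $H^1$-norm boundedness of Pavarino's $T_k$ transfers cleanly to the seminorm used on the quotients. Everything else is a one-line composition of uniform bounds.
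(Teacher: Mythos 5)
Your proposal is correct and follows essentially the same route as the paper's proof: factor $\Psi_k$ into the embedding (an isometry, since it acts as the identity on functions), Pavarino's uniformly bounded $T_k$, and the $H^1$-stable Ritz projection $\Pi_k$ from Proposition~\ref{prop:ritza}, then chain the three bounds. Your extra care about the quotient spaces and the equivalence of the $H^1$-seminorm with the quotient norm is a welcome refinement of a point the paper passes over silently, but it does not change the argument.
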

	\begin{proof}
		Let $\hat{u}=\left.T_{k} \circ I\right|_{S_{k+1}}(u)$ with $u \in \hat{\mathcal{S}}_{k+1}\left([-1,1]^{2}\right) $. Since $\hat{u} \in \hat{\mathcal{Q}}_{k}\left([-1,1]^{2}\right)$, then $\hat{u} \in H^1_{0}\left([-1,1]^{2}\right)$.
		
		Since the Ritz projecton is $H^1$ stable in $H^1_{0}\left([-1,1]^{2}\right)$ (Proposition~\ref{prop:ritza}), for  $\Pi_{k}: \hat{\mathcal{Q}}_{k}\left([-1,1]^{2}\right) \rightarrow \hat{\mathcal{S}}_{k}\left([-1,1]^{2}\right)$ we have, 
		$$
		||\Pi_{k}\hat{u}||_{H^{1}} \leq C_1||\hat{u}||_{H^{1}}.
		$$
		
		Furthermore, for $\bar{u}=I|_{S_{k+1}}(u)$, notice that, $\bar{u} \in \hat{\mathcal{Q}}_{k+1}\left([-1,1]^{2}\right)$. Then, since the interpolation operator $T_k$ is uniformly bounded (see proof in \cite{Pavarino1993/94})
		$$
		\left\|T_{k}(\bar{u})\right\|_{H^{1}} \leq  C_2 \|\bar{u}\|_{H^{1}}.
		$$ 
		
		Thus,
		$$
		||\Psi_{k}(u)||_{H^{1}} = ||\Pi_{k}\hat{u}||_{H^{1}} \leq C_1||\hat{u}||_{H^{1}} = C_1||T_{k}(\bar{u})||_{H^{1}} \leq C_1 \cdot  C_2 \|\bar{u}\|_{H^{1}}= \hat{C}||u||_{H^{1}},
		$$
		for all $u\in \hat{\mathcal{S}}_{k+1}$.
	\end{proof}
	
	
	From now on we will write $A \leq B$ where $A$ and $B$ are two positive definite matrices whenever $B-A$ is positive semi-definite, and $a \preceq b$ will mean that there exists a constant $c$ independent of $a$, $b$, the discretization parameter $H$, and the polynomial degree $k$, such that $a \leq c b$. Moreover, moving forward we consider that all the constant are independent of the mesh size and the polynomial degree unless otherwise stated.
	\begin{theorem}\label{thm:bounds} The additive Schwarz preconditioner $C_{h,k}$ defined in~\eqref{asprecond} satisfies the spectral bounds
		$H^{2} C_{h,k} \preceq A_{h} \preceq C_{h,k} $, where $H=diam(\Omega^{i})$.
	\end{theorem}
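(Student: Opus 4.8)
The plan is to run everything through the Additive Schwarz Lemma stated above, which identifies $(C_{h,k}u,u)_{h,k}$ with the splitting norm $\vertiii{u}^{2}=\inf_{u=\sum_i R_iu_i}\sum_i a(R_iu_i,R_iu_i)$. Writing $u$ for a generic element of $V_{h,k}$, the claimed equivalence $H^{2}C_{h,k}\preceq A_h\preceq C_{h,k}$ then splits into
\[
a(u,u)\preceq\vertiii{u}^{2}\quad\text{and}\quad\vertiii{u}^{2}\preceq H^{-2}\,a(u,u),
\]
the first of which must hold for \emph{every} admissible splitting, while the second only requires that \emph{one} good splitting exist. I would treat these in turn.

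For the upper estimate I would use the standard colouring argument, which is insensitive to the polynomial degree: the vertex patches $\{\Omega^i\}$ have uniformly bounded overlap (in a structured quad/hex mesh one may $4$-colour, resp.\ $8$-colour, the vertices by coordinate parity, so that like-coloured patches are disjoint and therefore $a$-orthogonal), hence the triangle inequality together with Cauchy--Schwarz gives $a(\sum_i R_iu_i,\sum_i R_iu_i)\le N_c\sum_i a(R_iu_i,R_iu_i)$ for any splitting, and in particular $a(u,u)\le N_c\vertiii{u}^{2}$ with $N_c$ independent of $h$ and $k$.

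The real work is the lower estimate, and here I would adapt Pavarino's stable-decomposition construction. Take the $\mathcal{Q}_1$ nodal hat functions $\{\theta_i\}$ at the mesh vertices as a partition of unity, so that $\sum_i\theta_i\equiv1$, $0\le\theta_i\le1$, $\operatorname{supp}\theta_i\subseteq\overline{\Omega^i}$, and $\|\nabla\theta_i\|_{L^\infty}\preceq H^{-1}$. For $u\in V_{h,k}$, on each cell $\theta_i u$ is a $\mathcal{Q}_1$ polynomial times an $\mathcal{S}_k$ polynomial, hence lies in $\mathcal{Q}_{k+1}$ but in general \emph{not} in $\mathcal{S}_{k+1}$ (e.g.\ $xy\cdot x^{k-1}y=x^ky^2$ has superlinear degree $k+2$); moreover $\theta_i u$ is continuous and vanishes on $\partial\Omega^i$. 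I would then define $u_i$ by applying to $\theta_i u$, cell by cell, the degree-reduction map built from Pavarino's interpolant $T_k:\hat{\mathcal{Q}}_{k+1}\to\hat{\mathcal{Q}}_k$ (the identity on $\hat{\mathcal{Q}}_k$ and, interpolating at the $k+1$ roots of $\mathscr{L}_{k+1}$, reproducing edge traces) followed by a projection $\Pi_k$ onto $\hat{\mathcal{S}}_k$ --- the composition whose $H^1$-stability is Lemma~\ref{interlemma}. It then remains to check: (i) $u_i\in V_i$, i.e.\ support in $\Omega^i$, continuity across interior edges, and vanishing on $\partial\Omega^i$; (ii) $\sum_i R_iu_i=u$, which should follow from linearity and the maps being the identity on $V_{h,k}$ ($T_k$ fixes $\mathcal{Q}_k$, $\Pi_k$ fixes $\mathcal{S}_k$), so that the pieces telescope back to $u$; and (iii) stability, where Lemma~\ref{interlemma} and an affine scaling of the reference patch yield
\[
a(R_iu_i,R_iu_i)\preceq\|\theta_i u\|_{H^1(\Omega^i)}^{2}\preceq\|\nabla u\|_{L^2(\Omega^i)}^{2}+H^{-2}\|u\|_{L^2(\Omega^i)}^{2},
\]
and then summation over $i$ using the finite overlap and the Poincar\'e inequality on $\Omega$ (legitimate since $u$ vanishes on $\Gamma_{\mathrm D}$ and $H\preceq\operatorname{diam}\Omega$) gives
\[
\vertiii{u}^{2}\le\sum_i a(R_iu_i,R_iu_i)\preceq\|\nabla u\|_{L^2(\Omega)}^{2}+H^{-2}\|u\|_{L^2(\Omega)}^{2}\preceq H^{-2}\,a(u,u).
\]
The three-dimensional case should be identical, using the $3$D analogues of $T_k$, Proposition~\ref{prop:ritz}, and Lemma~\ref{interlemma}.

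I expect the crux to be exactly items (i)--(ii): arranging the degree reduction so that each $u_i$ genuinely lands in $V_i$ while the pieces still sum \emph{exactly} to $u$. Because $\mathcal{S}_k\cdot\mathcal{Q}_1\not\subseteq\mathcal{S}_{k+1}$ one is forced to descend through $\mathcal{Q}_{k+1}$ and project back down; $T_k$ behaves well (it reproduces the vanishing trace on $\partial\Omega^i$ and, being a cell-wise interpolation at shared nodes, remains continuous), but the Ritz projection $\Pi_k$ of Lemma~\ref{interlemma} does not respect edge traces, so using it cell by cell destroys continuity, whereas using it at the patch level (onto continuous piecewise-$\mathcal{S}_k$ functions vanishing on $\partial\Omega^i$) restores continuity but appears to break the telescoping $\sum_i u_i=u$. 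Resolving this tension --- perhaps by replacing the Ritz projection in the \emph{decomposition} with a trace-preserving, serendipity-DOF-based projection onto $\mathcal{S}_k$, which is automatically continuity preserving and the identity on $\mathcal{S}_k$, while still invoking Lemma~\ref{interlemma} only for the final stability bound --- is the one genuinely delicate point. Once it is settled, all $k$-independence is inherited from Lemma~\ref{interlemma} and the sole power of $H$ is the $H^{-2}$ coming from $\|\nabla\theta_i\|_{L^\infty}^{2}$ together with Poincar\'e.
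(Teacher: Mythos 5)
Your proposal follows essentially the same route as the paper: the upper bound via bounded overlap of the vertex patches, and the lower bound via a stable decomposition $u_i=\Psi_k(\psi_i u)$ built from a partition of unity with $\|\nabla\psi_i\|_{L^\infty}\preceq H^{-1}$, the operator of \cref{interlemma}, the product estimate $\|\psi_i u\|_{H^1}^2\preceq H^{-2}\|u\|_{L^2(\Omega^i)}^2+\|\nabla u\|_{L^2(\Omega^i)}^2$, and the Additive Schwarz Lemma to convert the splitting estimate into $H^2 c_1 C_{h,k}\leq A_h$. The one point you leave open --- that applying the Ritz projection $\Pi_k$ cell by cell may break inter-element continuity (so $u_i\notin V_i$), while applying it patch-wise may break the exact telescoping $\sum_i u_i=u$ --- is a genuine subtlety, but be aware that the paper's own proof does not resolve it either: it simply sets $u_{i,k}=\Psi_k(\psi_i u_{h,k})$ and asserts $\sum_i\Psi_k(\psi_i u_{h,k})=\Psi_k(u_{h,k})=u_{h,k}$ by linearity, inheriting the construction from Pavarino without checking conformity of the pieces. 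So your analysis matches the paper's argument and, if anything, identifies a step the paper glosses over; your suggested repair (a trace-preserving, DOF-based projection onto $\mathcal{S}_k$ that is the identity on $\mathcal{S}_k$, reserving \cref{interlemma} for the stability bound alone) is a reasonable way to make that step rigorous.
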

	\begin{proof}
		Each point is covered by no more than $2^n$ subdomains $\Omega^i$ where $n = (2,3)$ is the number of dimensions, thence we can apply Lemma 3.2 (Finite Overlap) in \cite{Schoeberl1999} and obtain the upper bound $A_{h} \leq c_2 C_{h,k}$.
		
	In order to obtain the lower bound, for each $u_{h,k} \in V_{h,k}$, we need a representation  $u_{h,k}=\sum_{i=1}^{N} u_{i},$ where $u_{i} \in V_{i}$, such that
		\begin{equation}\label{lioncond}
			\sum_{i=1}^{N} ||u_{i}||_{A_{h}} \leq C_{0}^{2} ||u_{h,k}||_{A_{h}}, \quad \forall u_{h,k} \in V_{h,k}.
		\end{equation} 
		
		
		Using the operator $T_k$~\eqref{tk}, Pavarino~\cite{Pavarino1993/94} proves that indeed there is a representation for $u_{h,k}$ over $\sum_{i=1}^N R_i V_i$ of the form (\ref{lioncond}) for quadrilateral spaces. Following the same guideline presented in his work, we can obtain this representation for a function defined over serendipity spaces instead. For this purpose, we will use the operator $\Psi_{k}: \hat{\mathcal{S}}_{k+1}\left([-1,1]^{2}\right) \rightarrow \hat{\mathcal{S}}_{k}\left([-1,1]^{2}\right)$ obtained in our \cref{interlemma}.
		
		We now take the construction of a partition of unity $\left\{\psi_{i}\right\}, \psi_{i} \in C^{\infty}(\Omega)$ given by Pavarino~\cite{Pavarino1993/94} which is such that $
		\operatorname{supp} \psi_{i} \subseteq \Omega^{i}$ and $\left\|\nabla \psi_i\right\|_{L_{\infty}} \preceq H^{-1}$, and define, $
		u_{i}=\Psi_{k}(\psi_{i} u_{h,k})$.

		
		Since $\Psi_{k}$ is linear
		$$
		\sum_{i=1}^{N} u_{i}=\sum_{i=1}^{N} \Psi_{k}\left(\psi_{i} u_{h,k}\right)=\Psi_{k} (u_{h,k})=u_{h,k},
		$$
		and so
		\begin{equation}
			\begin{aligned}
				\left\|u_{i}\right\|_{A_{h}}^{2} & \preceq\left\|u_{i}\right\|_{H^1}^{2}=\left\|\Psi_{k}\left(\psi_{i} u_{h}\right)\right\|_{H^1}^{2}.
			\end{aligned}
		\end{equation}
		
		By \cref{interlemma} it follows that,
		\begin{equation}\label{tofollow}
			\begin{aligned}
				\left\|\Psi_{k}\left(\psi_{i} u_{h}\right)\right\|_{H^1}^{2}  \preceq\left\|\psi_{i} u_{h}\right\|_{H^1}^{2}
				& \preceq H^{-2 }\left\|u_{h}\right\|_{L^2(\Omega^i)}^{2}+\left\|\nabla u_{h}\right\|_{L^2(\Omega^i)}^{2},
			\end{aligned}
		\end{equation}
		where we also applied \cite[Lemma\,2.6]{Schoeberl1999} to the last inequality. Therefore,
		\begin{equation} \label{intermresult}
			\begin{aligned}
				\sum_{i=1}^{N}\left\|u_{i}\right\|_{A_{h}}^{2} & \preceq \sum_{i=1}^{N}\left[H^{-2}\left\|u_{h}\right\|_{ L^2(\Omega^{i})}^{2}+\left\|\nabla u_{h}\right\|_{ L^2(\Omega^{i})}^{2}\right] \\
				& \preceq H^{-2 }\left\|u_{h}\right\|_{L^2}^{2}+\left\|\nabla u_{h}\right\|_{L^2}^{2} \preceq H^{-2 }\left\|u_{h}\right\|_{H^1}^{2} \preceq H^{-2 }\left\|u_{h}\right\|_{A_{h}}^{2}.
			\end{aligned}
		\end{equation} 
		
		We obtained a representation as (\ref{lioncond}) with $C^2_0=H^{-2}c^{-1}_1$, with $c_1$ independent from $H$. Moreover, by the Additive Schwarz Lemma we have
		\begin{equation}\label{lowerbound2}
			\begin{aligned}
				\left\|u_{h,k}\right\|^2_{C_{h,k}}&=\inf\limits _{u_{h,k}=\sum\limits_{u_{i} \in V_{i} }R_{i} u_{i}} \sum\limits_{i=1}^{M} \left(A_{i}u_{i}, u_{i}\right) \leq \sum\limits_{i=1}^{N}  \left(A_i u_{i},  u_{i}\right)\\&= \sum\limits_{i=1}^{N} \left(R_i^T A_h R_i u_{i},  u_{i}\right)= \sum\limits_{i=1}^{N} ||u_{i}||^2_{A_{h}} \leq H^{2} c_{h,k} ||u_{h,k}||^2_{A_{h}},
			\end{aligned}
		\end{equation}
		where we used \eqref{intermresult} for the last inequality.
	\end{proof}	
	
	As a remark, since $H = diam(\Omega^i),$ we also have $H = 2h$.

	\subsection{Additive Schwarz with a low-order space}\label{subsec:asloworder}
	\cref{thm:bounds} shows that a patch smoother provides conditioning estimates independent of the polynomial degree, but leaves the condition number scaling like $H^{-2} \sim h^{-2}$.
        To also eliminate the dependence on the mesh size, we include a global low-order space in the decomposition~\eqref{decomp}:
	\begin{equation}
		V_{0} = \left\{ v \in V_{h,k} : v|_K \in \mathcal{Q}_1(K), \ \ \ K \in \mathcal{T}_h \right\}.
	\end{equation}
	There is also a natural inclusion operator $R_0: V_0 \rightarrow V_{h,k}$ since $V_0$ is a subspace of $V_{h,k}$.
	
	We can edit our subspace decomposition to include this space, so that
	\begin{equation}
		V_{h,k} = R_0 V_0 + \sum_{i=1}^N R_i V_i = V_0 + \sum_{i=1}^N V_i = \sum_{i=0}^N V_i.
	\end{equation}
	
	$V_0$ is defined on the entire mesh, but it includes only lowest-order functions and so plays the role of a ``coarse grid'' space.  We define  $a_0 : V_0 \times V_0 \rightarrow \mathbb{R}$:
	\begin{equation}
		a_0(u_0, v_0) = a(R_0 u_0, R_0 v_0), 
	\end{equation}
	and associated operator $A_0$ on $V_0$ via a Galerkin approach
	\begin{equation}
		A_0 = (R_0)^T A_h R_0.
	\end{equation}
	
	
	As before, this new decomposition can also be used to define an additive Schwarz preconditioner:
	\begin{equation}\label{twolevelprec}
		C_{h,k}^{-1} = R_0 A_0^{-1} (R_0)^T + \sum_{i=1}^N R_i A_i^{-1} (R_i)^T.
	\end{equation}
	That is, an application of this preconditioner requires solving a local problem on each patch together with solving a global system on the $\mathcal{Q}_1$ subspace. 
	
	\begin{lemma} \cite[Optimal two level preconditioner]{Schoeberl1999}\label{opttwolevprec}
		Assume the following statements are true:
		\begin{enumerate}[label={(\arabic*)},ref={\theproposition~(\arabic*)}]
			\item The overlap of local spaces is bounded by $N_O$.
			\item There exists a continuous interpolation operator $I_{0}: V_{h,k} \rightarrow V_{0},$ i.e.
			\begin{equation} 
				\left\|I_{0} u_{h,k}\right\|_{A_{h}} \leq c_{I}\left\|u_{h,k}\right\|_{A_{h}} \quad \forall u_{h,k} \in V_{h,k}
			\end{equation}
			\item The local splitting of the difference $u_{f}:=u_{h}- I_{0} u_{h}$ is stable, i. $e$.
			\begin{equation}\label{condiv}
				\inf\limits_{u_{f}=\sum{u_{i}} \atop {u_{i} \in V_{i}\atop i=1,...,N} } \sum\left\|u_{i}\right\|_{A_h}^{2}  \leq c_{L}\left\|u_{h}\right\|_{A_{h}}^{2}
			\end{equation}
		\end{enumerate}
		Then the two level preconditioner fulfills the optimal spectral bounds
		$$
		c_{1} C_{h,k} \leq A_{h} \leq c_{2} C_{h,k}
		$$
		with
		$$
		\begin{aligned}
			c_{1} &:=\left(c_{I}^{2}+c_{L}\right)^{-1} \\
			c_{2} &:=\left(1+N_{O}\right)
		\end{aligned}
		$$
	\end{lemma}  
	
	\begin{theorem}\label{thm:twolevelpreocond}
		The two level preconditioner $C_{h,k}$~\eqref{twolevelprec} satisfies optimal spectral bounds:  $C_{h,k} \preceq A_{h} \preceq C_{h,k}$, with both bounds independent of $h$ and $k$.
	\end{theorem} 
	
	\begin{proof}
		We just  need to verify the conditions given in \cref{opttwolevprec}. 
		
		As we mentioned before, the number of overlapping cells is bounded by $4$ in 2D and by $8$ in 3D. Moreover, since we have $V_0 \subset V_{h,k}$, the continuity of the lifting/natural inclusion operator $R_0$, is trivial. 
		
		In addition, let $I_0: V_{h,k} \rightarrow V_0$ be the Ritz projection with respect to the bilinear form $a(\cdot,\cdot)$, so that
		$$
		a\left( I_{0} u, v\right)=a(u, v), \quad \forall v \in V_0, \quad \text { for } u \in V_{h,k}.
		$$
		Once again, we will use it only to prove the existence of bounds, and we do not propose to use it during computations.
		
		The properties of $I_0$ and the bilinear form give that
		\begin{align*}
			\|I_0 u_{h,k}\|^2_{A_{h}}=a(I_0 u_{h,k},I_0 u_{h,k})=a(u_{h,k},I_0 u_{h,k}) \leq \|u_{h,k}\|_{A_{h}} \|I_0 u_{h,k}\|_{A_{h}},
		\end{align*}
		so that
		$$
		\left\|I_{0} u_{h,k}\right\|_{A_{h}} \leq \left\|u_{h,k}\right\|_{A_{h}} \quad \forall u_{h,k} \in V_{h,k}.
		$$
		Hence, $I_0$ is bounded in the energy norm independent of the polynomial degree or mesh parameter of $V_{h,k}$.
		
		
		
		
		Furthermore,
		\begin{equation}
			\begin{aligned}
				\left\|u_{f}\right\|^2_{L^2} &=\left\|u_{h,k}- I_{0} u_{h,k}\right\|^2_{L^2} \preceq H^{2}\left\|u_{h,k}\right\|^2_{H^1},
			\end{aligned}
		\end{equation}
		where the last inequality comes from Proposition~\ref{prop:ritzb}. Then, we follow,
		\begin{equation}
			\begin{aligned}
				\left\|u_{f}\right\|^2_{H^1} &=\left\|u_{h,k}-I_{0} u_{h,k}\right\|^2_{H^1} 
				\leq 2(\left\|u_{h,k}\right\|^2_{H^1}+\left\| I_{0} u_{h,k}\right\|^2_{H^1} )
				\preceq \left\|u_{h,k}\right\|^2_{H^1},
			\end{aligned}
		\end{equation}
		where in the last step we used the $H^1$ stability of the Ritz projection as given in Proposition~\ref{prop:ritza}.
		
		Now we define $u_{f,i}=\Psi_{k}(\psi_{i} u_{f}) $ and proceeding as in the proof of the lower bound in \cref{thm:bounds} we get
		\begin{equation*}
			\begin{aligned}
				\sum_{i=1}^{N}\left\|u_{f,i}\right\|_{A_{h}}^{2}& \preceq H^{-2 }\left\|u_{f}\right\|_{L^2}^{2}+\left\|\nabla u_{f}\right\|_{L^2}^{2} 
				\preceq H^{-2 } H^2 \left\|u_{h,k}\right\|_{H^1}^{2}+\left\| u_{f}\right\|_{H^1}^{2}
				\preceq \left\|u_{h,k}\right\|_{H^1}^{2}.\end{aligned}
		\end{equation*} 
		
		Finally, we can say
		\begin{equation}\label{pavarinolowerbound}
			\inf\limits_{u_{f}=\sum{u_{i}} \atop {u_{i} \in V_{i}\atop i=1,...,N} } \sum\left\|u_{i}\right\|_{A_h}^{2} \leq \sum\limits^N_{i=1}{||u_{f,i}||_{A_h}^{2}} \preceq \left\|u_{h,k}\right\|_{A_h}^{2}.
		\end{equation}

	\end{proof}
	\subsection{Multigrid algorithms with patch smoothers}\label{ssec:mg}
	Including a global but low-order approximating space in the additive Schwarz decomposition is one approach to obtaining an optimal-order algorithm.
	On the other hand, one may define a coarse space by geometrically coarsening the finite element mesh without changing the polynomial degree of the space -- a multigrid method.
	In this section, we prove that applying the additive Schwarz patch smoother~\eqref{asprecond} on each level of a V-cycle provides a contraction factor independent of the mesh size and polynomial degree.
	By contrast, such estimates do not hold for classical pointwise smoothers such as damped Jacobi.
	Here, we follow the approach~\cite{ArnoldFalkWinther}, which applies the same abstract setting to the different use case of patch-smooted problems in $H(\text{div})$.
	
	%
	Abstractly, we posit a sequence of nested spaces (in practice, obtained through a mesh hierarchy) given by
	$$
	V^{1} \subset V^{2} \subset \ldots \subset V^{L}=V_{k,h},
	$$
	recalling that we are using superscripts here rather than the subscripts in our patch-based space decompositions.
	
	
	Define $A^{l}: V^{l} \rightarrow V^{l}$ by
	$$
	\left(A^{l} v, w\right)= a(v, w) \text { for all } v, w \in V^{l}.
	$$
	Now, to prove the optimality of solvers using multigrid methods we shall apply the following theorem introduced in \cite{ArnoldFalkWinther}. Let $\Pi^{l}: V^{L} \rightarrow V^{l}$ be the orthogonal/Ritz projection with respect to the bilinear form $a$. Also, suppose that we are given for each $l>1$ a linear operator $D^{l}: V^{l} \rightarrow V^{l}$, which is the ``smoother'' and is intended to behave like an approximation to $(A^{l})^{-1}$. Finally, we use the standard $\mathrm{V}$-cycle multigrid algorithm by applying the smoother $D^{l}$ as defined in \cite{ArnoldFalkWinther} with operators $\Theta^{l}: V^{l} \rightarrow V^{l}$ beginning with $\Theta^{1}=(A^{1})^{-1}$. Given these conditions the theorem is stated as follows.  
	
	\begin{theorem}\label{thm:mgtheorem} Suppose that for each level $2 \leq l \leq L$ we have a  preconditioner $D^{l}: V^{l} \rightarrow V^{l}$, which is scaled such that $A^{l} \leq D^{l}$, is symmetric with respect to the $L^2$ inner product and positive semidefinite, and satisfies the conditions
		\begin{align}
			A\left(\left[I-(D^{l}) A^{l}\right] v, v\right) \geq 0 \text { for all } v \in V^{l},
		\end{align}
		and
		\begin{align}
			\left((D^{l})^{-1}\left[I-\Pi^{l-1}\right] v,\left[I-\Pi^{l-1}\right] v\right) \leq \alpha A\left(\left[I-\Pi^{l-1}\right] v,\left[I-\Pi^{l-1}\right] v\right) \text { for all } v \in V^{l},
		\end{align}
		where $\alpha$ is some constant. Then
		$$
		0 \leq A\left(\left[I-\Theta^{l} A^{l}\right] v, v\right) \leq \delta A(v, v) \text { for all } v \in V^{l}
		$$
		where $\delta=\alpha /(\alpha+2 m)$. Moreover, the error operator $I-\Theta^{L} A^{L}$ is a positive definite contraction on $V^{L}$ whose operator norm relative to the $A$ inner product is bounded by $\delta$. Also, the eigenvalues of $\Theta^{L} A^{L}$ belong to the interval $[1-\delta, 1]$.
	\end{theorem}

        We can use a patch smoother of the form~\eqref{asprecond} on each level $l$ (after suitable scaling) to obtain degree-independent multigrid estimates.
	We decompose each $V^l$ into patches as in \cref{patchsmoothers}, which leads to
	\begin{equation}
		V^l_{h,k} = \sum_{i=1}^{N_l} R^{l}_{i} V^l_i= \sum_{i=1}^N V^l_i,
	\end{equation}
	and then construct preconditioner \eqref{asprecond} resulting in,
	%
	\begin{equation}\label{mgprecond}
		D^{l} = C_{h_l,k_l}^{-1} = \sum_{i=1}^{N_l} R^{l}_{i} (A^{l}_{i})^{-1} (R^{l}_{i})^T,
	\end{equation}
	where $R^{l}_{i}: V^{l}_{i} \rightarrow V^{l}_{h,k}$ is the  lifting operator at level $l$. Furthermore, notice that
	$$
	\Pi^{l}_{i}=R^{l}_{i} (A^{l}_{i})^{-1} (R^{l}_{i})^{T} A
	$$
	is the $A(., .)$-orthogonal or Ritz projection to $R^l_{i} V^l_{i}$ \cite{Schoeberl1999}. Thus, we can rewrite \eqref{mgprecond} as proposed in \cite{ArnoldFalkWinther},
	\begin{equation}\label{mgprecondarnold}
		D^{l} = \eta \sum_{i=1}^{N_l} \Pi^{l}_{i} (A^{l})^{-1},
	\end{equation}
	where $\eta$ is the scaling factor.
	
	We can also define $\Pi^{l}: V^L \rightarrow V^l $
	$$
	\Pi^{l}= \sum^{N_l}_{i=1} \Pi^{l}_i.
	$$

        \begin{theorem}
          The hypotheses of \cref{thm:mgtheorem} are satisfied by the patch smoother \eqref{mgprecond}. 
        \end{theorem}
	\begin{proof}
	  The proof for the first hypothesis is exactly the one given in \cite{ArnoldFalkWinther}, and so is omitted.
		
		
		The second hypothesis reduces to showing that for $v=\left(I-\Pi^{l-1}\right) u, u \in V^{l}$, we can decompose $v$ as $\sum^{N_l}_{i} v_{i}$ with $v_{i} \in V^{l}_{i}$ such that
		\begin{align}
			\sum^{N_l}_{i=1} A\left(v_{i}, v_{i}\right) \leq c A(v, v).
		\end{align}
		Since we are reducing the analysis to the nested case this follows immediately from \eqref{pavarinolowerbound}. Notice that all we need to prove is that $v=\left(I-\Pi^{l-1}\right) u$ is in $V^{l}$. We know that $u \in V^{l}$, thus $Iu\in V^{l}$ and $\Pi^{l-1}u \in V^{l-1} \subset V^{l}$, so $v \in V^{l}$. Decomposing $v_i=\Psi_{k}(\psi_{i} v)$ and proceeding as in the proof of the lower bound in \cref{thm:twolevelpreocond} we get 
		\begin{equation}
			\sum\limits^{N_l}_{i=1}{||v_{i}||_{A}^{2}} \preceq \left\|v\right\|_{A}^{2}.
		\end{equation}
		
		Therefore, the hypotheses of \cref{thm:mgtheorem} are satisfied.
	\end{proof}

	\section{Numerical Results}\label{sec:num}
	We have conducted a simple set of numerical expe\-riments illustrating our theory using Firedrake~\cite{Rathgeber:2016}, an automated system for finite element methods.  Recent work~\cite{crum2022bringing} in the Firedrake project has enabled a wide class of serendipity elements.  We use these elements plus many solver features available through the rich interface to PETSc~\cite{kirby2018solver} to conduct our investigation.  This includes a kind of two-way interface via petsc4py allowing PETSc extensions to import Firedrake, and also includes an integration of FIredrake with PETSc's multigrid functio\-nality~\cite{kirby2018solver,mitchell2016high}.
	
	We test our methods on the Poisson problem in two and three dimensions and planar elasticity.
	Our experiments were run on a Dell Precision Workstation with dual 14-core Xeon E5-2697 CPUs running at 2.60GHz and 256GB of RAM.
	The full Firedrake code stack supports distributed memory parallelism.
	Since the software components for forming and solving patch problems are designed for parallelism~\cite{farrell2019pcpatch}, and Firedrake-based implementations using patch smoothers for other problems are known to scale well across many nodes~\cite{abu2022monolithic, farrell2019augmented}.
	Therefore, we focus on the algorithmic performance of the additive Schwarz and multigrid methods rather than parallel scaling issues.  However, we do use multiple workstation cores to accelerate our computations.
	
	Firedrake's solver infrastructure has expedited the implementation of our me\-thods.
	In each case we use conjugate gradient iteration with a relative stopping tole\-rance of $10^{-12}$ with our various preconditioners.
	The two-level Schwarz method~\eqref{twolevelprec} was applied via Firedrake's \lstinline{P1PC} preconditioner.
	This class uses the Firedrake bilinear form to generate code for building the small patch problems and the low-order global problem and then sets up user-configurable solvers to be used for each, which are then solved during run-time.
	We solve the local patch problems using Firedrake's \lstinline{ASMStarPC}, which uses  connectivity information from the underlying PETSc DMPLEX~\cite{lange2016efficient} to algebraically extract degrees of freedom for the patches and interface to PETSc's existing additive Schwarz infrastructure.  Because Firedrake does not support general hexahedral meshes, only extruded quadrilateral meshes, we had to adapt this class to identify the patches for our three-dimensional calculations.
	Various matrix formats (dense or sparse) and PETSc solvers are available for the subproblems, and we have found that direct factorization on dense matrices offers the best performance for the patches.
	On the global low-order space we used a fairly standard geometric multigrid algorithm -- two iterations of Jacobi with Chebyshev acceleration on each level and a sparse direct method (MUMPS) on the coarsest mesh.
        We can also apply the multigrid functionality direction the high-order discretization, using two sweeps of the \lstinline{ASMStarPC} on each level with a direct solver on the coarse grid, as analyzed in~\cref{ssec:mg}.
	
	\begin{figure}
		\begin{center}
			\input{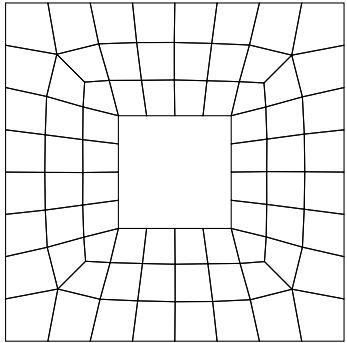}
		\end{center}
		\caption{Coarse unstructured mesh with 96 vertices and 120 quadrilaterals.}
		\label{fig:quadmsh}
	\end{figure}
	
	For the Poisson problem in 2D, we chose $\Omega$ to be a $3 \times 3$ square with a $1.5 \times 1.5$ square excluded from the center, as shown in \cref{fig:quadmsh}, and we perform numerical experiments on uniform refinements of this.
	We chose Dirichlet boundary conditions and forcing function $f(x, y)$ such that the true solution is $u(x, y) = e^{xy} \sin(3 \pi x) \sin(4 \pi y)$.
	We approximated the problem with both $\mathcal{S}_k$ and $\mathcal{Q}_k$ elements with $k=2,3,4$ on uniform refinements of the base mesh.  We measured the $L^2$ error of each result, and observed optimal-order convergence rates for both serendipity and tensor-product elements.

	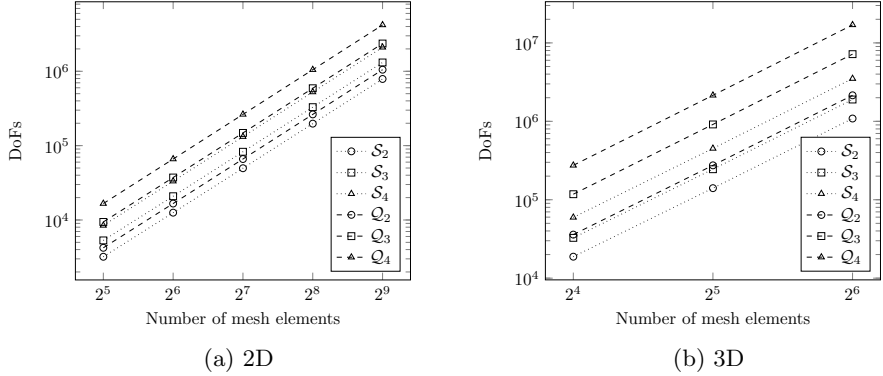
\begin{figure}
          \begin{center}
		\begin{subfigure}[l]{0.48\textwidth}
			\begin{tikzpicture}[scale=0.65]
				\begin{axis}[xtick={32, 64, 128, 256, 512},
					ylabel=DoFs,
					xlabel=Number of mesh elements,
					xmode=log,
					ymode=log,
					log basis x={2},
					log basis y={10},
					legend pos = south east]
					\addplot[dotted,mark=o, mark options={solid}]
					coordinates {(32, 3201) (64, 12545) (128, 49665)
						(256, 197633) (512, 788481)};
					\addlegendentry{$\mathcal{S}_2$};
					\addplot[dotted,mark=square, mark options={solid}]
					coordinates {(32, 5313) (64, 20865) (128, 82689) (256, 329217) (512, 1313793)};
					\addlegendentry{$\mathcal{S}_3$};
					\addplot[dotted,mark=triangle, mark options={solid}]
					coordinates {(32,8449 ) (64, 33281) (128, 132097) (256, 526337) (512, 2101249)};
					\addlegendentry{$\mathcal{S}_4$};
					\addplot[dashed,mark=o, mark options={solid}]
					coordinates {(32, 4225) (64, 16641) (128, 66049)
						(256, 263169) (512, 1050625)};
					\addlegendentry{$\mathcal{Q}_2$};
					\addplot[dashed,mark=square, mark options={solid}]
					coordinates {(32, 9409) (64, 37249) (128, 148225)
						(256, 591361) (512, 2362369)};
					\addlegendentry{$\mathcal{Q}_3$};
					\addplot[dashed,mark=triangle, mark options={solid}]
					coordinates {(32, 16641) (64, 66049) (128, 263169)
						(256, 1050625) (512, 4198401)};
					\addlegendentry{$\mathcal{Q}_4$};
				\end{axis}
			\end{tikzpicture}
			\caption{2D}
			\label{fig:2dpoissondofs}
		\end{subfigure}
	\begin{subfigure}[l]{0.48\textwidth}
			\begin{tikzpicture}[scale=0.65]
				\begin{axis}[xtick={16, 32, 64},
					ylabel=DoFs,
					xlabel=Number of mesh elements,
					xmode=log,
					ymode=log,
					log basis x={2},
					log basis y={10},
					legend pos = south east]
					\addplot[dotted,mark=o, mark options={solid}]
					coordinates {(16, 18785) (32, 140481) (64, 1085825)};
					\addlegendentry{$\mathcal{S}_2$};
					\addplot[dotted,mark=square, mark options={solid}]
					coordinates {(16, 32657) (32, 245025) (64, 1897025)};
					\addlegendentry{$\mathcal{S}_3$};
					\addplot[dotted,mark=triangle, mark options={solid}]
					coordinates {(16, 59585) (32, 450945) (64, 3506945)};
					\addlegendentry{$\mathcal{S}_4$};
					\addplot[dashed,mark=o, mark options={solid}]
					coordinates {(16, 35937) (32, 274625) (64, 2146689)};
					\addlegendentry{$\mathcal{Q}_2$};
					\addplot[dashed,mark=square, mark options={solid}]
					coordinates {(16, 117649) (32, 912673) (64, 7189057)};
					\addlegendentry{$\mathcal{Q}_3$};
					\addplot[dashed,mark=triangle, mark options={solid}]
					coordinates {(16, 274625) (32, 2146689) (64, 16974593)};
					\addlegendentry{$\mathcal{Q}_4$};
				\end{axis}
			\end{tikzpicture}
			\caption{3D}
			\label{fig:3dpoissondofs}
	\end{subfigure}
        \end{center}
	\caption{Number of degrees of freedom in $\mathcal{S}_k$ and $\mathcal{Q}_k$ on regular meshes.}
	\end{figure}
	
	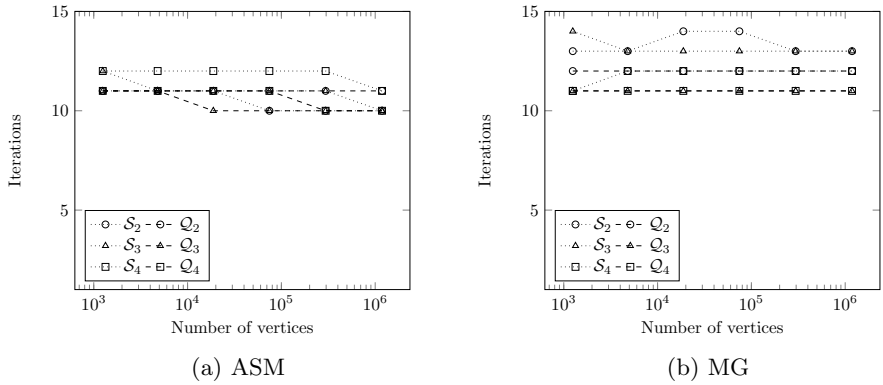
\begin{figure}
		\begin{subfigure}[l]{0.48\textwidth}
			\begin{tikzpicture}[scale=0.65]
				\begin{axis}[xlabel={Number of vertices},
					ylabel=Iterations,
					xmode=log,
					log basis x={10}, ymin=1, ymax=15,
					legend pos = south west,
					legend style = {legend columns = 2}]
					\addplot[dotted,mark=o, mark options={solid}]
					table [x=NV, y=ITS, col sep=comma]{code/results_2d_s2_p1pc.csv};
					\addlegendentry{$\mathcal{S}_2$};
					\addplot[dashed,mark=o, mark options={solid}]
					table [x=NV, y=ITS, col sep=comma]{code/results_2d_q2_p1pc.csv};
					\addlegendentry{$\mathcal{Q}_2$};
					\addplot[dotted,mark=triangle, mark options={solid}]
					table [x=NV, y=ITS, col sep=comma]{code/results_2d_s3_p1pc.csv};
					\addlegendentry{$\mathcal{S}_3$};
					\addplot[dashed,mark=triangle, mark options={solid}]
					table [x=NV, y=ITS, col sep=comma]{code/results_2d_q3_p1pc.csv};
					\addlegendentry{$\mathcal{Q}_3$};
					\addplot[dotted,mark=square, mark options={solid}]
					table [x=NV, y=ITS, col sep=comma]{code/results_2d_s4_p1pc.csv};
					\addlegendentry{$\mathcal{S}_4$};
					\addplot[dashed,mark=square, mark options={solid}]
					table [x=NV, y=ITS, col sep=comma]{code/results_2d_q4_p1pc.csv};
					\addlegendentry{$\mathcal{Q}_4$};				
				\end{axis}
			\end{tikzpicture}
			\label{ASMPoisson2dits}
			\caption{ASM}
		\end{subfigure}
		\begin{subfigure}[l]{0.48\textwidth}
			\begin{tikzpicture}[scale=0.65]
				\begin{axis}[xlabel={Number of vertices},
					ylabel=Iterations,
					xmode=log,
					log basis x={10}, ymin=1, ymax=15,
					legend pos = south west,
					legend style = {legend columns = 2}]
					\addplot[dotted,mark=o, mark options={solid}]
					table [x=NV, y=ITS, col sep=comma]{code/results_2d_s2_mg.csv};
					\addlegendentry{$\mathcal{S}_2$};
					\addplot[dashed,mark=o, mark options={solid}]
					table [x=NV, y=ITS, col sep=comma]{code/results_2d_q2_mg.csv};
					\addlegendentry{$\mathcal{Q}_2$};
					\addplot[dotted,mark=triangle, mark options={solid}]
					table [x=NV, y=ITS, col sep=comma]{code/results_2d_s3_mg.csv};
					\addlegendentry{$\mathcal{S}_3$};
					\addplot[dashed,mark=triangle, mark options={solid}]
					table [x=NV, y=ITS, col sep=comma]{code/results_2d_q3_mg.csv};
					\addlegendentry{$\mathcal{Q}_3$};
					\addplot[dotted,mark=square, mark options={solid}]
					table [x=NV, y=ITS, col sep=comma]{code/results_2d_s4_mg.csv};
					\addlegendentry{$\mathcal{S}_4$};
					\addplot[dashed,mark=square, mark options={solid}]
					table [x=NV, y=ITS, col sep=comma]{code/results_2d_q4_mg.csv};
					\addlegendentry{$\mathcal{Q}_4$};
				\end{axis}
			\end{tikzpicture}
			\label{MGPoisson2dits}
			\caption{MG}
		\end{subfigure}	
		\caption{Iteration counts for ASM and MG-preconditioned conjugate gradients for the 2D Poisson problem.}
		\label{Poisson2dits}
	\end{figure}
	
	\cref{Poisson2dits} plots the iteration counts obtained using additive Schwarz and multigrid preconditioners for both element families.
	Both methods vary only slightly among meshes and discretization spaces, consistent with the theory we have developed.

        In order to assess our timing results, consider \cref{fig:2dpoissondofs}, which plots the size of each approximating space on a regular $N \times N$ mesh (the same trends hold for our unstructured mesh).
  We see the ordering
	\[
	\dim \mathcal{S}_2 <
	\dim \mathcal{Q}_2 <
	\dim \mathcal{S}_3 <
	\dim \mathcal{S}_4 <
	\dim \mathcal{Q}_3 <
	\dim \mathcal{Q}_4.
	\]
	and $\mathcal{S}_4$ and $\mathcal{Q}_3$ are particularly close.  Consequently, we might hope for 2D serendipity elements to provide a higher order of accuracy than tensor-product elements for a similar run-time.  
	\cref{fig:2dpoissontime} plots the time needed to construct the preconditioners and solve the resulting linear system using 16 workstation cores.
	\cref{2dasmsetup} and \cref{2dmgsetup} reveal a nontrivial $\mathcal{O}(1)$ overhead, notably worse for $\mathcal{S}_k$ elements than $\mathcal{Q}_k$.
	Because the $\mathcal{Q}_k$ bases are interpolatory at given points, Firedrake is able to optimize transfer operations, but there may be additional issues as well.  At any rate, after about $10^4$ vertices (divided over 16 cores), the preconditioner setup and system solve scale well with the number of vertices.  
	
	The ASM-preconditioned solver seems to demonstrate reduced run-time for $\mathcal{S}_k$ over $\mathcal{Q}_k$ elements, in line with our hopes, at least asymptotically.  Considering just the solve phase, we see that $\mathcal{S}_2$ gives a modest win over $\mathcal{Q}_2$, and the gain is larger at higher degree.  Moreover, the solve times for $\mathcal{S}_3$ and $\mathcal{Q}_2$, are comparable, as are $\mathcal{S}_4$ and $\mathcal{Q}_3$.  So for similar run time, serendipity elements with ASM preconditioners seem to deliver one higher order of accuracy than tensor-product elements.
        
	For multigrid, however, the picture is somewhat ambiguous.
	On meshes we consider, multigrid for $\mathcal{Q}_k$ elements outperforms that for $\mathcal{S}_k$ spaces for degrees 2 and 3.
	This seems to be due to fast grid transfer operations available for $\mathcal{Q}_k$ elements compensating for the larger patch spaces.
	However, in all of these cases, the ASM-preconditioned iteration seems to give better overall performance than multigrid and indeed reflects the kinds of benefits we hope to obtain from serendipity spaces.

	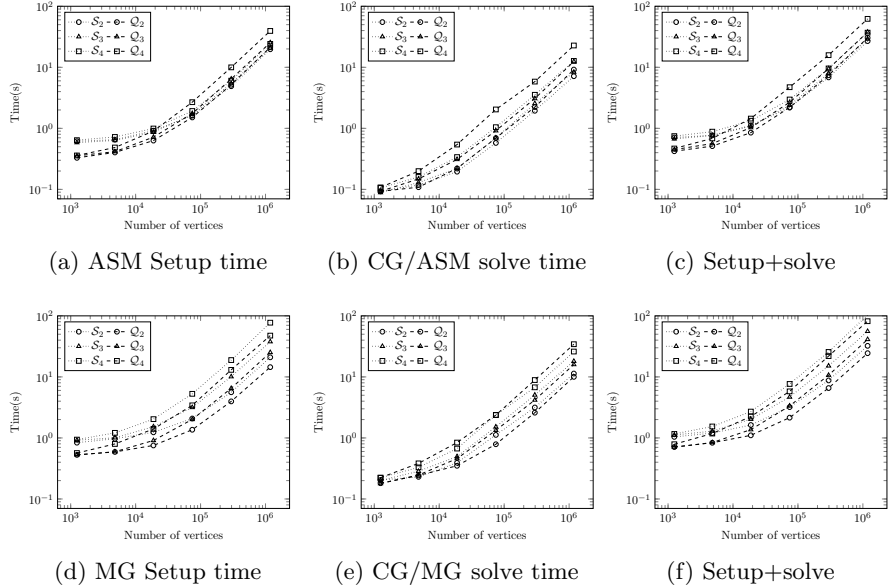
\begin{figure}
		\begin{subfigure}[l]{0.31\textwidth}
			\begin{tikzpicture}[scale=0.45]
				\begin{axis}[xlabel={Number of vertices},
					ylabel=Time(s),
					xmode=log, ymode=log,
					log basis x={10},
					ymin=7e-2, ymax=1e2,
					legend pos = north west,
					legend style = {legend columns = 2}]
					\addplot[dotted,mark=o, mark options={solid}]
					table [x=NV, y=PCSETUP, col sep=comma]{code/results_2d_s2_p1pc.csv};
					\addlegendentry{$\mathcal{S}_2$};
					\addplot[dashed,mark=o, mark options={solid}]
					table [x=NV, y=PCSETUP, col sep=comma]{code/results_2d_q2_p1pc.csv};
					\addlegendentry{$\mathcal{Q}_2$};
					\addplot[dotted,mark=triangle, mark options={solid}]
					table [x=NV, y=PCSETUP, col sep=comma]{code/results_2d_s3_p1pc.csv};
					\addlegendentry{$\mathcal{S}_3$};
					\addplot[dashed,mark=triangle, mark options={solid}]
					table [x=NV, y=PCSETUP, col sep=comma]{code/results_2d_q3_p1pc.csv};
					\addlegendentry{$\mathcal{Q}_3$};
					\addplot[dotted,mark=square, mark options={solid}]
					table [x=NV, y=PCSETUP, col sep=comma]{code/results_2d_s4_p1pc.csv};
					\addlegendentry{$\mathcal{S}_4$};
					\addplot[dashed,mark=square, mark options={solid}]
					table [x=NV, y=PCSETUP, col sep=comma]{code/results_2d_q4_p1pc.csv};
					\addlegendentry{$\mathcal{Q}_4$};
				\end{axis}
			\end{tikzpicture}
			\caption{ASM Setup time}
			\label{2dasmsetup}
		\end{subfigure}
		\begin{subfigure}[c]{0.3\textwidth}
			\begin{tikzpicture}[scale=0.45]
				\begin{axis}[xlabel={Number of vertices},
					ylabel=Time(s),
					xmode=log, ymode=log,
					log basis x={10},
					ymin=7e-2, ymax=1e2,          
					legend pos = north west,
					legend style = {legend columns = 2}]
					\addplot[dotted,mark=o, mark options={solid}]
					table [x=NV, y=KSPSOLVE, col sep=comma]{code/results_2d_s2_p1pc.csv};
					\addlegendentry{$\mathcal{S}_2$};
					\addplot[dashed,mark=o, mark options={solid}]
					table [x=NV, y=KSPSOLVE, col sep=comma]{code/results_2d_q2_p1pc.csv};
					\addlegendentry{$\mathcal{Q}_2$};
					\addplot[dotted,mark=triangle, mark options={solid}]
					table [x=NV, y=KSPSOLVE, col sep=comma]{code/results_2d_s3_p1pc.csv};
					\addlegendentry{$\mathcal{S}_3$};
					\addplot[dashed,mark=triangle, mark options={solid}]
					table [x=NV, y=KSPSOLVE, col sep=comma]{code/results_2d_q3_p1pc.csv};
					\addlegendentry{$\mathcal{Q}_3$};
					\addplot[dotted,mark=square, mark options={solid}]
					table [x=NV, y =KSPSOLVE, col sep=comma]{code/results_2d_s4_p1pc.csv};
					\addlegendentry{$\mathcal{S}_4$};
					\addplot[dashed,mark=square, mark options={solid}]
					table [x=NV, y=KSPSOLVE, col sep=comma]{code/results_2d_q4_p1pc.csv};
					\addlegendentry{$\mathcal{Q}_4$};
				\end{axis}
			\end{tikzpicture}
			\caption{CG/ASM solve time}
			\label{2dasmsolve}
		\end{subfigure}
		\begin{subfigure}[c]{0.3\textwidth}
			\begin{tikzpicture}[scale=0.45]
				\begin{axis}[xlabel={Number of vertices},
					ylabel=Time(s),
					xmode=log, ymode=log,
					ymin=7e-2, ymax=1e2,
					log basis x={10},
					legend pos = north west,
					legend style = {legend columns = 2}]
					\addplot[dotted,mark=o, mark options={solid}]
					table [x=NV, y expr=\thisrow{KSPSOLVE}+\thisrow{PCSETUP}, col sep=comma]{code/results_2d_s2_p1pc.csv};
					\addlegendentry{$\mathcal{S}_2$};
					\addplot[dashed,mark=o, mark options={solid}]
					table [x=NV, y expr=\thisrow{KSPSOLVE}+\thisrow{PCSETUP}, col sep=comma]{code/results_2d_q2_p1pc.csv};
					\addlegendentry{$\mathcal{Q}_2$};
					\addplot[dotted,mark=triangle, mark options={solid}]
					table [x=NV, y expr=\thisrow{KSPSOLVE}+\thisrow{PCSETUP}, col sep=comma]{code/results_2d_s3_p1pc.csv};
					\addlegendentry{$\mathcal{S}_3$};
					\addplot[dashed,mark=triangle, mark options={solid}]
					table [x=NV, y expr=\thisrow{KSPSOLVE}+\thisrow{PCSETUP}, col sep=comma]{code/results_2d_q3_p1pc.csv};
					\addlegendentry{$\mathcal{Q}_3$};
					\addplot[dotted,mark=square, mark options={solid}]
					table [x=NV, y expr=\thisrow{KSPSOLVE}+\thisrow{PCSETUP}, col sep=comma]{code/results_2d_s4_p1pc.csv};
					\addlegendentry{$\mathcal{S}_4$};
					\addplot[dashed,mark=square, mark options={solid}]
					table [x=NV, y expr=\thisrow{KSPSOLVE}+\thisrow{PCSETUP}, col sep=comma]{code/results_2d_q4_p1pc.csv};
					\addlegendentry{$\mathcal{Q}_4$};
				\end{axis}
			\end{tikzpicture}
			\caption{Setup+solve}
			\label{2dasmsetupsolve}
		\end{subfigure} \\
		\begin{subfigure}[l]{0.31\textwidth}
			\begin{tikzpicture}[scale=0.45]
				\begin{axis}[xlabel={Number of vertices},
					ylabel=Time(s),
					xmode=log, ymode=log, ymin=7e-2, ymax=1e2,
					log basis x={10},
					legend pos = north west,
					legend style = {legend columns = 2}]
					\addplot[dotted,mark=o, mark options={solid}]
					table [x=NV, y=PCSETUP, col sep=comma]{code/results_2d_s2_mg.csv};
					\addlegendentry{$\mathcal{S}_2$};
					\addplot[dashed,mark=o, mark options={solid}]
					table [x=NV, y=PCSETUP, col sep=comma]{code/results_2d_q2_mg.csv};
					\addlegendentry{$\mathcal{Q}_2$};
					\addplot[dotted,mark=triangle, mark options={solid}]
					table [x=NV, y=PCSETUP, col sep=comma]{code/results_2d_s3_mg.csv};
					\addlegendentry{$\mathcal{S}_3$};
					\addplot[dashed,mark=triangle, mark options={solid}]
					table [x=NV, y=PCSETUP, col sep=comma]{code/results_2d_q3_mg.csv};
					\addlegendentry{$\mathcal{Q}_3$};
					\addplot[dotted,mark=square, mark options={solid}]
					table [x=NV, y=PCSETUP, col sep=comma]{code/results_2d_s4_mg.csv};
					\addlegendentry{$\mathcal{S}_4$};
					\addplot[dashed,mark=square, mark options={solid}]
					table [x=NV, y=PCSETUP, col sep=comma]{code/results_2d_q4_mg.csv};
					\addlegendentry{$\mathcal{Q}_4$};
				\end{axis}
			\end{tikzpicture}
			\caption{MG Setup time}
			\label{2dmgsetup}    
		\end{subfigure}
		\begin{subfigure}[c]{0.3\textwidth}
			\begin{tikzpicture}[scale=0.45]
				\begin{axis}[xlabel={Number of vertices},
					ylabel=Time(s),
					xmode=log, ymode=log, ymin=7e-2, ymax=1e2,
					log basis x={10},
					legend pos = north west,
					legend style = {legend columns = 2}]
					\addplot[dotted,mark=o, mark options={solid}]
					table [x=NV, y=KSPSOLVE, col sep=comma]{code/results_2d_s2_mg.csv};
					\addlegendentry{$\mathcal{S}_2$};
					\addplot[dashed,mark=o, mark options={solid}]
					table [x=NV, y=KSPSOLVE, col sep=comma]{code/results_2d_q2_mg.csv};
					\addlegendentry{$\mathcal{Q}_2$};
					\addplot[dotted,mark=triangle, mark options={solid}]
					table [x=NV, y=KSPSOLVE, col sep=comma]{code/results_2d_s3_mg.csv};
					\addlegendentry{$\mathcal{S}_3$};
					\addplot[dashed,mark=triangle, mark options={solid}]
					table [x=NV, y=KSPSOLVE, col sep=comma]{code/results_2d_q3_mg.csv};
					\addlegendentry{$\mathcal{Q}_3$};
					\addplot[dotted,mark=square, mark options={solid}]
					table [x=NV, y=KSPSOLVE, col sep=comma]{code/results_2d_s4_mg.csv};
					\addlegendentry{$\mathcal{S}_4$};
					\addplot[dashed,mark=square, mark options={solid}]
					table [x=NV, y=KSPSOLVE, col sep=comma]{code/results_2d_q4_mg.csv};
					\addlegendentry{$\mathcal{Q}_4$};
				\end{axis}
			\end{tikzpicture}
			\caption{CG/MG solve time}
			\label{2dmgsolve}
		\end{subfigure}
		\begin{subfigure}[c]{0.3\textwidth}
			\begin{tikzpicture}[scale=0.45]
				\begin{axis}[xlabel={Number of vertices},
					ylabel=Time(s),
					xmode=log, ymode=log, ymin=7e-2, ymax=1e2,
					log basis x={10},
					legend pos = north west,
					legend style = {legend columns = 2}]
					\addplot[dotted,mark=o, mark options={solid}]
					table [x=NV, y expr=\thisrow{KSPSOLVE}+\thisrow{PCSETUP}, col sep=comma]{code/results_2d_s2_mg.csv};
					\addlegendentry{$\mathcal{S}_2$};
					\addplot[dashed,mark=o, mark options={solid}]
					table [x=NV, y expr=\thisrow{KSPSOLVE}+\thisrow{PCSETUP}, col sep=comma]{code/results_2d_q2_mg.csv};
					\addlegendentry{$\mathcal{Q}_2$};
					\addplot[dotted,mark=triangle, mark options={solid}]
					table [x=NV, y expr=\thisrow{KSPSOLVE}+\thisrow{PCSETUP}, col sep=comma]{code/results_2d_s3_mg.csv};
					\addlegendentry{$\mathcal{S}_3$};
					\addplot[dashed,mark=triangle, mark options={solid}]
					table [x=NV, y expr=\thisrow{KSPSOLVE}+\thisrow{PCSETUP}, col sep=comma]{code/results_2d_q3_mg.csv};
					\addlegendentry{$\mathcal{Q}_3$};
					\addplot[dotted,mark=square, mark options={solid}]
					table [x=NV, y expr=\thisrow{KSPSOLVE}+\thisrow{PCSETUP}, col sep=comma]{code/results_2d_s4_mg.csv};
					\addlegendentry{$\mathcal{S}_4$};
					\addplot[dashed,mark=square, mark options={solid}]
					table [x=NV, y expr=\thisrow{KSPSOLVE}+\thisrow{PCSETUP}, col sep=comma]{code/results_2d_q4_mg.csv};
					\addlegendentry{$\mathcal{Q}_4$};
				\end{axis}
			\end{tikzpicture}
			\caption{Setup+solve}
			\label{2dmgsetupsolve}    
		\end{subfigure}
		\caption{Solver time for 2D Poisson on unstructured mesh.  The first plots give the time to set up the ASM or MG preconditioner in PETSc.  The middle column gives the time to solve the system using preconditioned conjugate gradients.  The final column gives the sum of the first two.}
		\label{fig:2dpoissontime}
	\end{figure}
	
	Since Firedrake does not yet support proper hexahedral meshes, we test our solvers in three dimensions for the Poisson problem on the unit cube, choosing Dirichlet boundary conditions and forcing function to match a known smooth solution.
	As in 2D, we approximate the solution with $\mathcal{Q}_k$ and $\mathcal{S}_k$ of degrees 2, 3, and 4.  We begin with an initial $4 \times 4 \times 4$ coarse mesh and uniformly refine it, reporting simulations on an $N \times N \times N$ mesh with $N=8, 16, 32, 64$.  Again, we partition the mesh over 16 cores of the workstation.  We remark that Firedrake distributes extruded meshes according in a two-dimensional fashion, so that all cells in a vertical column appear on the same processor.  This does not appear to be a major limitation in our particular situation.

	The solver configuration for three dimensions is nearly identical to that for two dimensions, other than that we modified the \texttt{ASMStarPC} class to properly identify patches on extruded meshes.
	\cref{Poisson3dits} shows only mild variation of the conjugate gradient iteration count for the various discretizations.
	As with the two-dimensional case, \cref{3dasmsetup} and~\cref{3dmgsetup} also show a considerable overhead in the setup phase for the preconditioners.
	Moreover, the overall cost to set up the ASM preconditioner seems much higher than that for multigrid.
	However, \cref{3dasmsolve} and~\cref{3dmgsolve} show that the reverse is true for solving the linear system.
	\cref{3dasmsetupsolve} and~\cref{3dmgsetupsolve} show that the total cost for setup plus a single solve is somewhat cheaper for the ASM-preconditioned method, so we recommend the additive Schwarz method, especially when the system must be solved repeatedly.
	In \cref{3dasmsolve}, we do see a small win for $\mathcal{S}_2$ elements over $\mathcal{Q}_2$.
        We also observe that the cost of serendipity elements tends to track a lower-order $\mathcal{Q}_k$ element, despite giving a higher order of accuracy -- $\mathcal{S}_3$ costs about the same $\mathcal{Q}_2$ and $\mathcal{S}_4$ about the same as $\mathcal{Q}_3$.  These relative run times seem to be in accordance with \cref{fig:3dpoissondofs}.	
	
	\begin{figure}
		\begin{subfigure}[l]{0.48\textwidth}
			\begin{tikzpicture}[scale=0.65]
				\begin{axis}[xlabel={Number of vertices},
					ylabel=Iterations,
					xmode=log,
					log basis x={10}, ymin=1, ymax=15,
					legend pos = south west,
					legend style = {legend columns = 2}]
					\addplot[dotted,mark=o, mark options={solid}]
					table [x=NV, y=ITS, col sep=comma]{code/results_3d_s2_p1pc.csv};
					\addlegendentry{$\mathcal{S}_2$};
					\addplot[dashed,mark=o, mark options={solid}]
					table [x=NV, y=ITS, col sep=comma]{code/results_3d_q2_p1pc.csv};
					\addlegendentry{$\mathcal{Q}_2$};
					\addplot[dotted,mark=triangle, mark options={solid}]
					table [x=NV, y=ITS, col sep=comma]{code/results_3d_s3_p1pc.csv};
					\addlegendentry{$\mathcal{S}_3$};
					\addplot[dashed,mark=triangle, mark options={solid}]
					table [x=NV, y=ITS, col sep=comma]{code/results_3d_q3_p1pc.csv};
					\addlegendentry{$\mathcal{Q}_3$};
					\addplot[dotted,mark=square, mark options={solid}]
					table [x=NV, y=ITS, col sep=comma]{code/results_3d_s4_p1pc.csv};
					\addlegendentry{$\mathcal{S}_4$};
					\addplot[dashed,mark=square, mark options={solid}]
					table [x=NV, y=ITS, col sep=comma]{code/results_3d_q4_p1pc.csv};
					\addlegendentry{$\mathcal{Q}_4$};				
				\end{axis}
			\end{tikzpicture}
			\label{ASMPoisson3dits}
			\caption{ASM}
		\end{subfigure}
		\begin{subfigure}[l]{0.48\textwidth}
			\begin{tikzpicture}[scale=0.65]
				\begin{axis}[xlabel={Number of vertices},
					ylabel=Iterations,
					xmode=log,
					log basis x={10}, ymin=1, ymax=15,
					legend pos = south west,
					legend style = {legend columns = 2}]
					\addplot[dotted,mark=o, mark options={solid}]
					table [x=NV, y=ITS, col sep=comma]{code/results_3d_s2_mg.csv};
					\addlegendentry{$\mathcal{S}_2$};
					\addplot[dashed,mark=o, mark options={solid}]
					table [x=NV, y=ITS, col sep=comma]{code/results_3d_q2_mg.csv};
					\addlegendentry{$\mathcal{Q}_2$};
					\addplot[dotted,mark=triangle, mark options={solid}]
					table [x=NV, y=ITS, col sep=comma]{code/results_3d_s3_mg.csv};
					\addlegendentry{$\mathcal{S}_3$};
					\addplot[dashed,mark=triangle, mark options={solid}]
					table [x=NV, y=ITS, col sep=comma]{code/results_3d_q3_mg.csv};
					\addlegendentry{$\mathcal{Q}_3$};
					\addplot[dotted,mark=square, mark options={solid}]
					table [x=NV, y=ITS, col sep=comma]{code/results_3d_s4_mg.csv};
					\addlegendentry{$\mathcal{S}_4$};
					\addplot[dashed,mark=square, mark options={solid}]
					table [x=NV, y=ITS, col sep=comma]{code/results_3d_q4_mg.csv};
					\addlegendentry{$\mathcal{Q}_4$};
				\end{axis}
			\end{tikzpicture}
			\label{MGPoisson3dits}
			\caption{MG}
		\end{subfigure}	
		\caption{Iteration counts for ASM and MG-preconditioned conjugate gradients for the 3D Poisson problem.}
		\label{Poisson3dits}
	\end{figure}
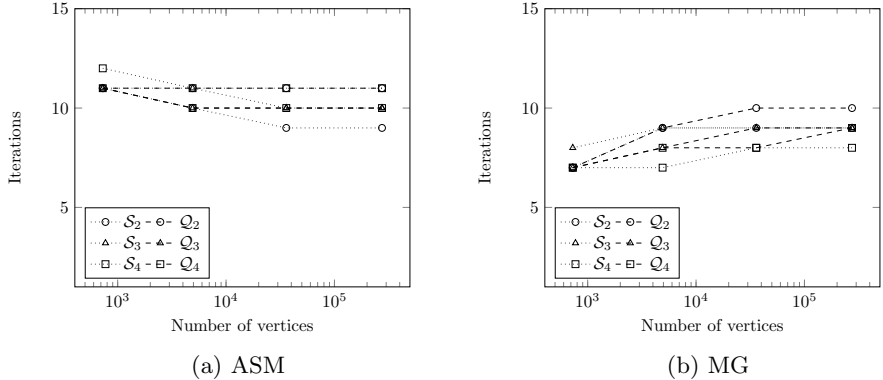

	\begin{figure}
		\begin{subfigure}[l]{0.31\textwidth}
			\begin{tikzpicture}[scale=0.45]
				\begin{axis}[xlabel={Number of vertices},
					ylabel=Time(s),
					xmode=log, ymode=log,
					log basis x={10},
					ymin=8e-1, ymax=1e4,
					legend pos = north west,
					legend style = {legend columns = 2}]
					\addplot[dotted,mark=o, mark options={solid}]
					table [x=NV, y=PCSETUP, col sep=comma]{code/results_3d_s2_p1pc.csv};
					\addlegendentry{$\mathcal{S}_2$};
					\addplot[dashed,mark=o, mark options={solid}]
					table [x=NV, y=PCSETUP, col sep=comma]{code/results_3d_q2_p1pc.csv};
					\addlegendentry{$\mathcal{Q}_2$};
					\addplot[dotted,mark=triangle, mark options={solid}]
					table [x=NV, y=PCSETUP, col sep=comma]{code/results_3d_s3_p1pc.csv};
					\addlegendentry{$\mathcal{S}_3$};
					\addplot[dashed,mark=triangle, mark options={solid}]
					table [x=NV, y=PCSETUP, col sep=comma]{code/results_3d_q3_p1pc.csv};
					\addlegendentry{$\mathcal{Q}_3$};
					\addplot[dotted,mark=square, mark options={solid}]
					table [x=NV, y=PCSETUP, col sep=comma]{code/results_3d_s4_p1pc.csv};
					\addlegendentry{$\mathcal{S}_4$};
					\addplot[dashed,mark=square, mark options={solid}]
					table [x=NV, y=PCSETUP, col sep=comma]{code/results_3d_q4_p1pc.csv};
					\addlegendentry{$\mathcal{Q}_4$};
				\end{axis}
			\end{tikzpicture}
			\caption{ASM Setup time}
			\label{3dasmsetup}
		\end{subfigure}
		\begin{subfigure}[c]{0.3\textwidth}
			\begin{tikzpicture}[scale=0.45]
				\begin{axis}[xlabel={Number of vertices},
					ylabel=Time(s),
					xmode=log, ymode=log,
					ymin=8e-1, ymax=1e4,
					log basis x={10},
					legend pos = north west,
					legend style = {legend columns = 2}]
					\addplot[dotted,mark=o, mark options={solid}]
					table [x=NV, y=KSPSOLVE, col sep=comma]{code/results_3d_s2_p1pc.csv};
					\addlegendentry{$\mathcal{S}_2$};
					\addplot[dashed,mark=o, mark options={solid}]
					table [x=NV, y=KSPSOLVE, col sep=comma]{code/results_3d_q2_p1pc.csv};
					\addlegendentry{$\mathcal{Q}_2$};
					\addplot[dotted,mark=triangle, mark options={solid}]
					table [x=NV, y=KSPSOLVE, col sep=comma]{code/results_3d_s3_p1pc.csv};
					\addlegendentry{$\mathcal{S}_3$};
					\addplot[dashed,mark=triangle, mark options={solid}]
					table [x=NV, y=KSPSOLVE, col sep=comma]{code/results_3d_q3_p1pc.csv};
					\addlegendentry{$\mathcal{Q}_3$};
					\addplot[dotted,mark=square, mark options={solid}]
					table [x=NV, y =KSPSOLVE, col sep=comma]{code/results_3d_s4_p1pc.csv};
					\addlegendentry{$\mathcal{S}_4$};
					\addplot[dashed,mark=square, mark options={solid}]
					table [x=NV, y=KSPSOLVE, col sep=comma]{code/results_3d_q4_p1pc.csv};
					\addlegendentry{$\mathcal{Q}_4$};
				\end{axis}
			\end{tikzpicture}
			\caption{CG/ASM solve time}
			\label{3dasmsolve}
		\end{subfigure}
		\begin{subfigure}[c]{0.3\textwidth}
			\begin{tikzpicture}[scale=0.45]
				\begin{axis}[xlabel={Number of vertices},
					ylabel=Time(s),
					xmode=log, ymode=log,
					ymin=8e-1, ymax=1e4,
					log basis x={10},
					legend pos = north west,
					legend style = {legend columns = 2}]
					\addplot[dotted,mark=o, mark options={solid}]
					table [x=NV, y expr=\thisrow{KSPSOLVE}+\thisrow{PCSETUP}, col sep=comma]{code/results_3d_s2_p1pc.csv};
					\addlegendentry{$\mathcal{S}_2$};
					\addplot[dashed,mark=o, mark options={solid}]
					table [x=NV, y expr=\thisrow{KSPSOLVE}+\thisrow{PCSETUP}, col sep=comma]{code/results_3d_q2_p1pc.csv};
					\addlegendentry{$\mathcal{Q}_2$};
					\addplot[dotted,mark=triangle, mark options={solid}]
					table [x=NV, y expr=\thisrow{KSPSOLVE}+\thisrow{PCSETUP}, col sep=comma]{code/results_3d_s3_p1pc.csv};
					\addlegendentry{$\mathcal{S}_3$};
					\addplot[dashed,mark=triangle, mark options={solid}]
					table [x=NV, y expr=\thisrow{KSPSOLVE}+\thisrow{PCSETUP}, col sep=comma]{code/results_3d_q3_p1pc.csv};
					\addlegendentry{$\mathcal{Q}_3$};
					\addplot[dotted,mark=square, mark options={solid}]
					table [x=NV, y expr=\thisrow{KSPSOLVE}+\thisrow{PCSETUP}, col sep=comma]{code/results_3d_s4_p1pc.csv};
					\addlegendentry{$\mathcal{S}_4$};
					\addplot[dashed,mark=square, mark options={solid}]
					table [x=NV, y expr=\thisrow{KSPSOLVE}+\thisrow{PCSETUP}, col sep=comma]{code/results_3d_q4_p1pc.csv};
					\addlegendentry{$\mathcal{Q}_4$};
				\end{axis}
			\end{tikzpicture}
			\caption{Setup+solve}
			\label{3dasmsetupsolve}
		\end{subfigure} \\
		\begin{subfigure}[l]{0.31\textwidth}
			\begin{tikzpicture}[scale=0.45]
				\begin{axis}[xlabel={Number of vertices},
					ylabel=Time(s),
					xmode=log, ymode=log,
					ymin=8e-1, ymax=1e4,          
					log basis x={10},
					legend pos = north west,
					legend style = {legend columns = 2}]
					\addplot[dotted,mark=o, mark options={solid}]
					table [x=NV, y=PCSETUP, col sep=comma]{code/results_3d_s2_mg.csv};
					\addlegendentry{$\mathcal{S}_2$};
					\addplot[dashed,mark=o, mark options={solid}]
					table [x=NV, y=PCSETUP, col sep=comma]{code/results_3d_q2_mg.csv};
					\addlegendentry{$\mathcal{Q}_2$};
					\addplot[dotted,mark=triangle, mark options={solid}]
					table [x=NV, y=PCSETUP, col sep=comma]{code/results_3d_s3_mg.csv};
					\addlegendentry{$\mathcal{S}_3$};
					\addplot[dashed,mark=triangle, mark options={solid}]
					table [x=NV, y=PCSETUP, col sep=comma]{code/results_3d_q3_mg.csv};
					\addlegendentry{$\mathcal{Q}_3$};
					\addplot[dotted,mark=square, mark options={solid}]
					table [x=NV, y=PCSETUP, col sep=comma]{code/results_3d_s4_mg.csv};
					\addlegendentry{$\mathcal{S}_4$};
					\addplot[dashed,mark=square, mark options={solid}]
					table [x=NV, y=PCSETUP, col sep=comma]{code/results_3d_q4_mg.csv};
					\addlegendentry{$\mathcal{Q}_4$};
				\end{axis}
			\end{tikzpicture}
			\caption{MG Setup time}
			\label{3dmgsetup} 
		\end{subfigure}
		\begin{subfigure}[c]{0.3\textwidth}
			\begin{tikzpicture}[scale=0.45]
				\begin{axis}[xlabel={Number of vertices},
					ylabel=Time(s),
					xmode=log, ymode=log,
					ymin=8e-1, ymax=1e4,
					log basis x={10},
					legend pos = north west,
					legend style = {legend columns = 2}]
					\addplot[dotted,mark=o, mark options={solid}]
					table [x=NV, y=KSPSOLVE, col sep=comma]{code/results_3d_s2_mg.csv};
					\addlegendentry{$\mathcal{S}_2$};
					\addplot[dashed,mark=o, mark options={solid}]
					table [x=NV, y=KSPSOLVE, col sep=comma]{code/results_3d_q2_mg.csv};
					\addlegendentry{$\mathcal{Q}_2$};
					\addplot[dotted,mark=triangle, mark options={solid}]
					table [x=NV, y=KSPSOLVE, col sep=comma]{code/results_3d_s3_mg.csv};
					\addlegendentry{$\mathcal{S}_3$};
					\addplot[dashed,mark=triangle, mark options={solid}]
					table [x=NV, y=KSPSOLVE, col sep=comma]{code/results_3d_q3_mg.csv};
					\addlegendentry{$\mathcal{Q}_3$};
					\addplot[dotted,mark=square, mark options={solid}]
					table [x=NV, y=KSPSOLVE, col sep=comma]{code/results_3d_s4_mg.csv};
					\addlegendentry{$\mathcal{S}_4$};
					\addplot[dashed,mark=square, mark options={solid}]
					table [x=NV, y=KSPSOLVE, col sep=comma]{code/results_3d_q4_mg.csv};
					\addlegendentry{$\mathcal{Q}_4$};
				\end{axis}
			\end{tikzpicture}
			\caption{CG/MG solve time}
			\label{3dmgsolve}   
		\end{subfigure}
		\begin{subfigure}[c]{0.3\textwidth}
			\begin{tikzpicture}[scale=0.45]
				\begin{axis}[xlabel={Number of vertices},
					ylabel=Time(s),
					xmode=log, ymode=log,
					ymin=8e-1, ymax=1e4,
					log basis x={10},
					legend pos = north west,
					legend style = {legend columns = 2}]
					\addplot[dotted,mark=o, mark options={solid}]
					table [x=NV, y expr=\thisrow{KSPSOLVE}+\thisrow{PCSETUP}, col sep=comma]{code/results_3d_s2_mg.csv};
					\addlegendentry{$\mathcal{S}_2$};
					\addplot[dashed,mark=o, mark options={solid}]
					table [x=NV, y expr=\thisrow{KSPSOLVE}+\thisrow{PCSETUP}, col sep=comma]{code/results_3d_q2_mg.csv};
					\addlegendentry{$\mathcal{Q}_2$};
					\addplot[dotted,mark=triangle, mark options={solid}]
					table [x=NV, y expr=\thisrow{KSPSOLVE}+\thisrow{PCSETUP}, col sep=comma]{code/results_3d_s3_mg.csv};
					\addlegendentry{$\mathcal{S}_3$};
					\addplot[dashed,mark=triangle, mark options={solid}]
					table [x=NV, y expr=\thisrow{KSPSOLVE}+\thisrow{PCSETUP}, col sep=comma]{code/results_3d_q3_mg.csv};
					\addlegendentry{$\mathcal{Q}_3$};
					\addplot[dotted,mark=square, mark options={solid}]
					table [x=NV, y expr=\thisrow{KSPSOLVE}+\thisrow{PCSETUP}, col sep=comma]{code/results_3d_s4_mg.csv};
					\addlegendentry{$\mathcal{S}_4$};
					\addplot[dashed,mark=square, mark options={solid}]
					table [x=NV, y expr=\thisrow{KSPSOLVE}+\thisrow{PCSETUP}, col sep=comma]{code/results_3d_q4_mg.csv};
					\addlegendentry{$\mathcal{Q}_4$};
				\end{axis}
			\end{tikzpicture}
			\caption{Setup+solve}
			\label{3dmgsetupsolve}   
		\end{subfigure}
		\label{fig:3dpoissontime}
		\caption{Solver time for 3D Poisson.  The first plots give the time to set up the ASM or MG preconditioner in PETSc.  The middle column gives the solver time for preconditioned CG, and the final column sums the first two.}
	\end{figure}
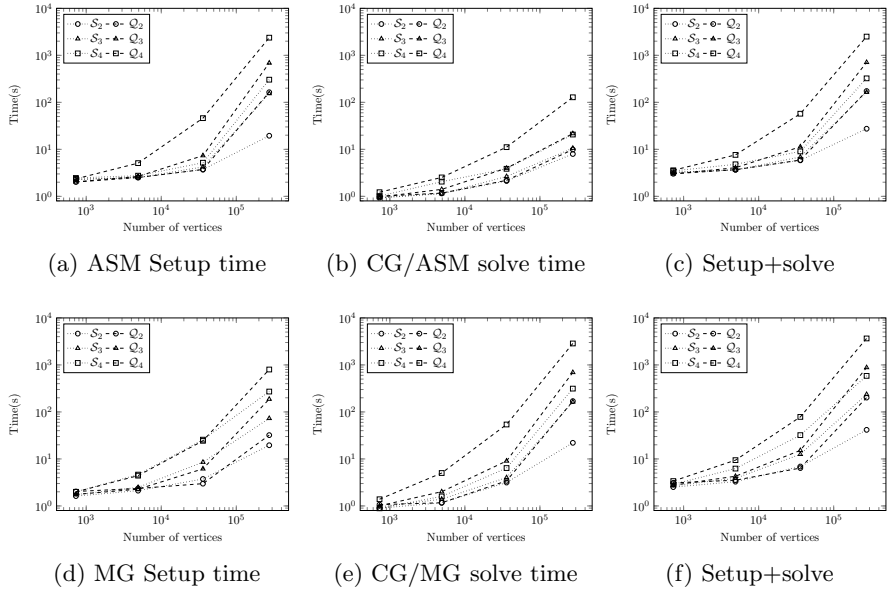

	We also test our preconditioners on the planar elasticty problem~\eqref{elasticity}.
	We consider a cantilever beam with $\Omega = [0,25] \times [0, 1]$, posing homogeneous Dirichlet (no-displacement) boundary conditions on the left end ($x=0$) and no-stress boundary conditions on the rest of the boundary.
	The forcing function is taken to be a constant downward force of the form $f(x, y) = (0, -g)$.
        We divided $\Omega$ into a $125 \times 5$ mesh of squares and performed a sequence of uniform refinements to estabish a multigrid hierarchy.  All runs are partitioned over 24 cores of the workstation.
	Although the problem is only run in 2D, the vector-valued nature of the problem makes larger patch problems and hence accentuates the gains obtained by reducing the basis size.  Hence one may hope to see a nice gain from serendipity elements.
	
	The iteration counts for both additive Schwarz and multigrid methods shown in \cref{fig:2delasticityits} are also in line with the theory, quite comparable to the results above for the Laplace operator.
	Timing results are showin in \cref{fig:elasticitytime}.  Again, we see some overhead in setting up the preconditioners, especially the ASM ones.  However, once the ASM preconditioners are set up, there is a clear win in solution time for serendipity elements, with $\mathcal{S}_k$ outperforming $\mathcal{Q}_k$ for each $k$.
	Moreover, $\mathcal{S}_4$ turns out to be cheaper than $\mathcal{Q}_3$.
        Although the multigrid algorithm for serendipity elements is not yet as performant as we would like (again, inter-grid transfers are much slower than for $\mathcal{Q}_k$ elements), on finer meshes additive Schwarz does give some clear advantages for serendipity, and we see that $\mathcal{S}_3$ is only slightly more expensive than $\mathcal{Q}_2$ and that $\mathcal{S}_4$ is somewhat cheaper than $\mathcal{Q}_3$ in total run time.	
	
	\begin{figure}
		\begin{subfigure}[l]{0.48\textwidth}
			\begin{tikzpicture}[scale=0.65]
				\begin{axis}[xlabel={Number of vertices},
					ylabel=Iterations,
					xmode=log,
					log basis x={10}, ymin=1, ymax=15,
					legend pos = south west,
					legend style = {legend columns = 2}]
					\addplot[dotted,mark=o, mark options={solid}]
					table [x=NV, y=ITS, col sep=comma]{code/results_elasticity_s2_p1pc.csv};
					\addlegendentry{$\mathcal{S}_2$};
					\addplot[dashed,mark=o, mark options={solid}]
					table [x=NV, y=ITS, col sep=comma]{code/results_elasticity_q2_p1pc.csv};
					\addlegendentry{$\mathcal{Q}_2$};
					\addplot[dotted,mark=triangle, mark options={solid}]
					table [x=NV, y=ITS, col sep=comma]{code/results_elasticity_s3_p1pc.csv};
					\addlegendentry{$\mathcal{S}_3$};
					\addplot[dashed,mark=triangle, mark options={solid}]
					table [x=NV, y=ITS, col sep=comma]{code/results_elasticity_q3_p1pc.csv};
					\addlegendentry{$\mathcal{Q}_3$};
					\addplot[dotted,mark=square, mark options={solid}]
					table [x=NV, y=ITS, col sep=comma]{code/results_elasticity_s4_p1pc.csv};
					\addlegendentry{$\mathcal{S}_4$};
					\addplot[dashed,mark=square, mark options={solid}]
					table [x=NV, y=ITS, col sep=comma]{code/results_elasticity_q4_p1pc.csv};
					\addlegendentry{$\mathcal{Q}_4$};				
				\end{axis}
			\end{tikzpicture}
			\label{ASMElasticity2dits}
			\caption{ASM}
		\end{subfigure}
		\begin{subfigure}[l]{0.48\textwidth}
			\begin{tikzpicture}[scale=0.65]
				\begin{axis}[xlabel={Number of vertices},
					ylabel=Iterations,
					xmode=log,
					log basis x={10}, ymin=1, ymax=15,
					legend pos = south west,
					legend style = {legend columns = 2}]
					\addplot[dotted,mark=o, mark options={solid}]
					table [x=NV, y=ITS, col sep=comma]{code/results_elasticity_s2_mg.csv};
					\addlegendentry{$\mathcal{S}_2$};
					\addplot[dashed,mark=o, mark options={solid}]
					table [x=NV, y=ITS, col sep=comma]{code/results_elasticity_q2_mg.csv};
					\addlegendentry{$\mathcal{Q}_2$};
					\addplot[dotted,mark=triangle, mark options={solid}]
					table [x=NV, y=ITS, col sep=comma]{code/results_elasticity_s3_mg.csv};
					\addlegendentry{$\mathcal{S}_3$};
					\addplot[dashed,mark=triangle, mark options={solid}]
					table [x=NV, y=ITS, col sep=comma]{code/results_elasticity_q3_mg.csv};
					\addlegendentry{$\mathcal{Q}_3$};
					\addplot[dotted,mark=square, mark options={solid}]
					table [x=NV, y=ITS, col sep=comma]{code/results_elasticity_s4_mg.csv};
					\addlegendentry{$\mathcal{S}_4$};
					\addplot[dashed,mark=square, mark options={solid}]
					table [x=NV, y=ITS, col sep=comma]{code/results_elasticity_q4_mg.csv};
					\addlegendentry{$\mathcal{Q}_4$};				
				\end{axis}
			\end{tikzpicture}
			\label{MGElasticity2dits}
			\caption{MG}
		\end{subfigure}	
		\caption{Iteration counts for ASM and MG-preconditioned conjugate gradients for the planar elasticity problem.}
		\label{fig:2delasticityits}
	\end{figure}
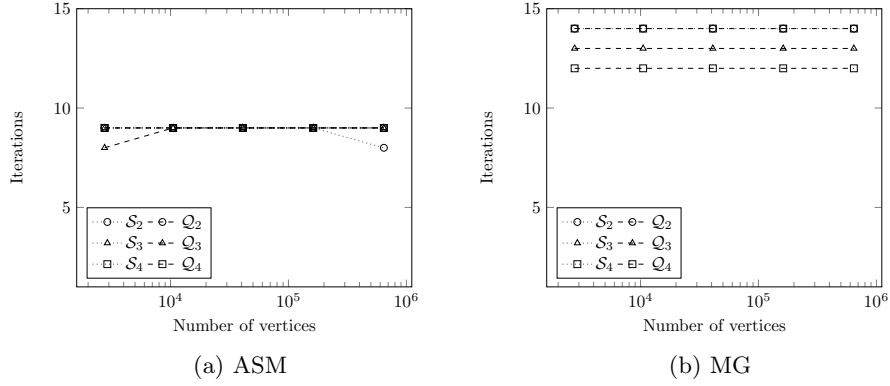

	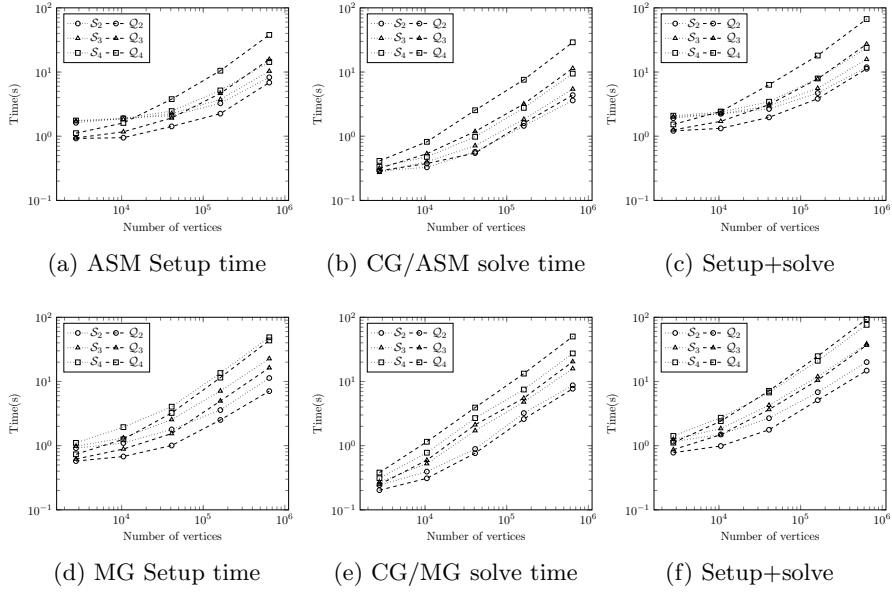
\begin{figure}
		\begin{subfigure}[l]{0.31\textwidth}
			\begin{tikzpicture}[scale=0.45]
				\begin{axis}[xlabel={Number of vertices},
					ylabel=Time(s),
					xmode=log, ymode=log,
					log basis x={10},
					ymin=1e-1, ymax=1e2,
					legend pos = north west,
					legend style = {legend columns = 2}]
					\addplot[dotted,mark=o, mark options={solid}]
					table [x=NV, y=PCSETUP, col sep=comma]{code/results_elasticity_s2_p1pc.csv};
					\addlegendentry{$\mathcal{S}_2$};
					\addplot[dashed,mark=o, mark options={solid}]
					table [x=NV, y=PCSETUP, col sep=comma]{code/results_elasticity_q2_p1pc.csv};
					\addlegendentry{$\mathcal{Q}_2$};
					\addplot[dotted,mark=triangle, mark options={solid}]
					table [x=NV, y=PCSETUP, col sep=comma]{code/results_elasticity_s3_p1pc.csv};
					\addlegendentry{$\mathcal{S}_3$};
					\addplot[dashed,mark=triangle, mark options={solid}]
					table [x=NV, y=PCSETUP, col sep=comma]{code/results_elasticity_q3_p1pc.csv};
					\addlegendentry{$\mathcal{Q}_3$};
					\addplot[dotted,mark=square, mark options={solid}]
					table [x=NV, y=PCSETUP, col sep=comma]{code/results_elasticity_s4_p1pc.csv};
					\addlegendentry{$\mathcal{S}_4$};
					\addplot[dashed,mark=square, mark options={solid}]
					table [x=NV, y=PCSETUP, col sep=comma]{code/results_elasticity_q4_p1pc.csv};
					\addlegendentry{$\mathcal{Q}_4$};
				\end{axis}
			\end{tikzpicture}
			\caption{ASM Setup time}
			\label{elasticityasmsetup}
		\end{subfigure}
		\begin{subfigure}[c]{0.3\textwidth}
			\begin{tikzpicture}[scale=0.45]
				\begin{axis}[xlabel={Number of vertices},
					ylabel=Time(s),
					xmode=log, ymode=log,
					log basis x={10},
					ymin=1e-1, ymax=1e2,          
					legend pos = north west,
					legend style = {legend columns = 2}]
					\addplot[dotted,mark=o, mark options={solid}]
					table [x=NV, y=KSPSOLVE, col sep=comma]{code/results_elasticity_s2_p1pc.csv};
					\addlegendentry{$\mathcal{S}_2$};
					\addplot[dashed,mark=o, mark options={solid}]
					table [x=NV, y=KSPSOLVE, col sep=comma]{code/results_elasticity_q2_p1pc.csv};
					\addlegendentry{$\mathcal{Q}_2$};
					\addplot[dotted,mark=triangle, mark options={solid}]
					table [x=NV, y=KSPSOLVE, col sep=comma]{code/results_elasticity_s3_p1pc.csv};
					\addlegendentry{$\mathcal{S}_3$};
					\addplot[dashed,mark=triangle, mark options={solid}]
					table [x=NV, y=KSPSOLVE, col sep=comma]{code/results_elasticity_q3_p1pc.csv};
					\addlegendentry{$\mathcal{Q}_3$};
					\addplot[dotted,mark=square, mark options={solid}]
					table [x=NV, y =KSPSOLVE, col sep=comma]{code/results_elasticity_s4_p1pc.csv};
					\addlegendentry{$\mathcal{S}_4$};
					\addplot[dashed,mark=square, mark options={solid}]
					table [x=NV, y=KSPSOLVE, col sep=comma]{code/results_elasticity_q4_p1pc.csv};
					\addlegendentry{$\mathcal{Q}_4$};
				\end{axis}
			\end{tikzpicture}
			\caption{CG/ASM solve time}
			\label{elasticityasmsolve}
		\end{subfigure}
		\begin{subfigure}[c]{0.3\textwidth}
			\begin{tikzpicture}[scale=0.45]
				\begin{axis}[xlabel={Number of vertices},
					ylabel=Time(s),
					xmode=log, ymode=log,
					ymin=1e-1, ymax=1e2,
					log basis x={10},
					legend pos = north west,
					legend style = {legend columns = 2}]
					\addplot[dotted,mark=o, mark options={solid}]
					table [x=NV, y expr=\thisrow{KSPSOLVE}+\thisrow{PCSETUP}, col sep=comma]{code/results_elasticity_s2_p1pc.csv};
					\addlegendentry{$\mathcal{S}_2$};
					\addplot[dashed,mark=o, mark options={solid}]
					table [x=NV, y expr=\thisrow{KSPSOLVE}+\thisrow{PCSETUP}, col sep=comma]{code/results_elasticity_q2_p1pc.csv};
					\addlegendentry{$\mathcal{Q}_2$};
					\addplot[dotted,mark=triangle, mark options={solid}]
					table [x=NV, y expr=\thisrow{KSPSOLVE}+\thisrow{PCSETUP}, col sep=comma]{code/results_elasticity_s3_p1pc.csv};
					\addlegendentry{$\mathcal{S}_3$};
					\addplot[dashed,mark=triangle, mark options={solid}]
					table [x=NV, y expr=\thisrow{KSPSOLVE}+\thisrow{PCSETUP}, col sep=comma]{code/results_elasticity_q3_p1pc.csv};
					\addlegendentry{$\mathcal{Q}_3$};
					\addplot[dotted,mark=square, mark options={solid}]
					table [x=NV, y expr=\thisrow{KSPSOLVE}+\thisrow{PCSETUP}, col sep=comma]{code/results_elasticity_s4_p1pc.csv};
					\addlegendentry{$\mathcal{S}_4$};
					\addplot[dashed,mark=square, mark options={solid}]
					table [x=NV, y expr=\thisrow{KSPSOLVE}+\thisrow{PCSETUP}, col sep=comma]{code/results_elasticity_q4_p1pc.csv};
					\addlegendentry{$\mathcal{Q}_4$};
				\end{axis}
			\end{tikzpicture}
			\caption{Setup+solve}
			\label{elasticityasmsetupsolve}
		\end{subfigure} \\
		\begin{subfigure}[l]{0.31\textwidth}
			\begin{tikzpicture}[scale=0.45]
				\begin{axis}[xlabel={Number of vertices},
					ylabel=Time(s),
					xmode=log, ymode=log, ymin=1e-1, ymax=1e2,
					log basis x={10},
					legend pos = north west,
					legend style = {legend columns = 2}]
					\addplot[dotted,mark=o, mark options={solid}]
					table [x=NV, y=PCSETUP, col sep=comma]{code/results_elasticity_s2_mg.csv};
					\addlegendentry{$\mathcal{S}_2$};
					\addplot[dashed,mark=o, mark options={solid}]
					table [x=NV, y=PCSETUP, col sep=comma]{code/results_elasticity_q2_mg.csv};
					\addlegendentry{$\mathcal{Q}_2$};
					\addplot[dotted,mark=triangle, mark options={solid}]
					table [x=NV, y=PCSETUP, col sep=comma]{code/results_elasticity_s3_mg.csv};
					\addlegendentry{$\mathcal{S}_3$};
					\addplot[dashed,mark=triangle, mark options={solid}]
					table [x=NV, y=PCSETUP, col sep=comma]{code/results_elasticity_q3_mg.csv};
					\addlegendentry{$\mathcal{Q}_3$};
					\addplot[dotted,mark=square, mark options={solid}]
					table [x=NV, y=PCSETUP, col sep=comma]{code/results_elasticity_s4_mg.csv};
					\addlegendentry{$\mathcal{S}_4$};
					\addplot[dashed,mark=square, mark options={solid}]
					table [x=NV, y=PCSETUP, col sep=comma]{code/results_elasticity_q4_mg.csv};
					\addlegendentry{$\mathcal{Q}_4$};
				\end{axis}
			\end{tikzpicture}
			\caption{MG Setup time}
			\label{elasticitymgsetup}    
		\end{subfigure}
		\begin{subfigure}[c]{0.3\textwidth}
			\begin{tikzpicture}[scale=0.45]
				\begin{axis}[xlabel={Number of vertices},
					ylabel=Time(s),
					xmode=log, ymode=log, ymin=1e-1, ymax=1e2,
					log basis x={10},
					legend pos = north west,
					legend style = {legend columns = 2}]
					\addplot[dotted,mark=o, mark options={solid}]
					table [x=NV, y=KSPSOLVE, col sep=comma]{code/results_elasticity_s2_mg.csv};
					\addlegendentry{$\mathcal{S}_2$};
					\addplot[dashed,mark=o, mark options={solid}]
					table [x=NV, y=KSPSOLVE, col sep=comma]{code/results_elasticity_q2_mg.csv};
					\addlegendentry{$\mathcal{Q}_2$};
					\addplot[dotted,mark=triangle, mark options={solid}]
					table [x=NV, y=KSPSOLVE, col sep=comma]{code/results_elasticity_s3_mg.csv};
					\addlegendentry{$\mathcal{S}_3$};
					\addplot[dashed,mark=triangle, mark options={solid}]
					table [x=NV, y=KSPSOLVE, col sep=comma]{code/results_elasticity_q3_mg.csv};
					\addlegendentry{$\mathcal{Q}_3$};
					\addplot[dotted,mark=square, mark options={solid}]
					table [x=NV, y=KSPSOLVE, col sep=comma]{code/results_elasticity_s4_mg.csv};
					\addlegendentry{$\mathcal{S}_4$};
					\addplot[dashed,mark=square, mark options={solid}]
					table [x=NV, y=KSPSOLVE, col sep=comma]{code/results_elasticity_q4_mg.csv};
					\addlegendentry{$\mathcal{Q}_4$};
				\end{axis}
			\end{tikzpicture}
			\caption{CG/MG solve time}
			\label{elasticitymgsolve}
		\end{subfigure}
		\begin{subfigure}[c]{0.3\textwidth}
			\begin{tikzpicture}[scale=0.45]
				\begin{axis}[xlabel={Number of vertices},
					ylabel=Time(s),
					xmode=log, ymode=log, ymin=1e-1, ymax=1e2,
					log basis x={10},
					legend pos = north west,
					legend style = {legend columns = 2}]
					\addplot[dotted,mark=o, mark options={solid}]
					table [x=NV, y expr=\thisrow{KSPSOLVE}+\thisrow{PCSETUP}, col sep=comma]{code/results_elasticity_s2_mg.csv};
					\addlegendentry{$\mathcal{S}_2$};
					\addplot[dashed,mark=o, mark options={solid}]
					table [x=NV, y expr=\thisrow{KSPSOLVE}+\thisrow{PCSETUP}, col sep=comma]{code/results_elasticity_q2_mg.csv};
					\addlegendentry{$\mathcal{Q}_2$};
					\addplot[dotted,mark=triangle, mark options={solid}]
					table [x=NV, y expr=\thisrow{KSPSOLVE}+\thisrow{PCSETUP}, col sep=comma]{code/results_elasticity_s3_mg.csv};
					\addlegendentry{$\mathcal{S}_3$};
					\addplot[dashed,mark=triangle, mark options={solid}]
					table [x=NV, y expr=\thisrow{KSPSOLVE}+\thisrow{PCSETUP}, col sep=comma]{code/results_elasticity_q3_mg.csv};
					\addlegendentry{$\mathcal{Q}_3$};
					\addplot[dotted,mark=square, mark options={solid}]
					table [x=NV, y expr=\thisrow{KSPSOLVE}+\thisrow{PCSETUP}, col sep=comma]{code/results_elasticity_s4_mg.csv};
					\addlegendentry{$\mathcal{S}_4$};
					\addplot[dashed,mark=square, mark options={solid}]
					table [x=NV, y expr=\thisrow{KSPSOLVE}+\thisrow{PCSETUP}, col sep=comma]{code/results_elasticity_q4_mg.csv};
					\addlegendentry{$\mathcal{Q}_4$};
				\end{axis}
			\end{tikzpicture}
			\caption{Setup+solve}
			\label{elasticitymgsetupsolve}    
		\end{subfigure}
		\caption{Solver time for planar elasticity.  The first plots give the time to set up the ASM or MG preconditioner in PETSc.  The middle column gives the time to solve the system using preconditioned conjugate gradients.  The final column gives the sum of the first two.}
		\label{fig:elasticitytime}
	\end{figure}

	\section{Conclusions}\label{sec:conc}
	We have extended the additive Schwarz theory for tensor-product methods to serendipity, proving the existence of optimal bounds for the preconditioners resulting from applying additive Schwarz methods with a low order global system and patch smoothers.
        These methods lead to preconditioned methods with iteration counts that are robust with respect to both the mesh size and polynomial degree, as confirmed both in theory and numerical examples.
        However, our analysis does not currently extend to the rational serendipity elements in~\cite{arbogast2022direct} that guarantee full accuracy in unstructured geometry.
        Moreover, although we have promising algorithmic results with Firedrake, it is likely that additional code optimizations are still possible.
        Finally, although the formulation of our method for nonsymmetric problems is straightforward, our analysis heavily uses both symmetry and coercivity, so a theoretical justification will require additional work.

	%

	
	\bibliographystyle{siamplain}
	\bibliography{references}
\end{document}